\theoremstyle{plain}
\newtheorem{theorem}{Theorem}[section]
\theoremstyle{remark}
\newtheorem{remark}[theorem]{Remark}
\theoremstyle{plain}
\newtheorem{corollary}[theorem]{Corollary}
\newtheorem{lemma}[theorem]{Lemma}
\newtheorem{proposition}[theorem]{Proposition}
\newtheorem{definition}[theorem]{Definition}
\numberwithin{equation}{section}
\def\N{{\mathbb N}}
\def\R{{\mathbb R}}
\newcommand{\E}{{\mathbb E}}
\renewcommand{\P}{{\mathbb P}}
\newcommand{\F}{{\mathscr F}}
\newcommand{\A}{{\mathscr A}}
\renewcommand{\O}{\Omega}
\newcommand{\g}{\gamma}
\newcommand{\calL}{{\mathscr L}}
\newcommand{\one}{{{\bf 1}}}
\newcommand{\lb}{\langle}
\newcommand{\rb}{\rangle}
\newcommand{\limn}{\lim_{n\to\infty}}
\newcommand{\umd}{\textsc{umd} }
\begin{document}

\title[stochastic integrability in infinite dimensions]{Weak characterizations of stochastic integrability and Dudley's theorem in infinite dimensions}

\author{Martin Ondrej\'at}
\address{Institute of Information Theory and Automation\\ Academy of Sciences Czech Republic\\ Pod Vod\'arenskou v\v e\v z\'{\i} 4 \\ 182 08 Prague 8\\ Czech Republic} \email{ondrejat@utia.cas.cz}

\author{Mark Veraar}
\address{Delft Institute of Applied Mathematics\\
Delft University of Technology \\ P.O. Box 5031\\ 2600 GA Delft\\The
Netherlands} \email{M.C.Veraar@tudelft.nl}

\keywords{Stochastic integration in Banach spaces, UMD Banach spaces, cylindrical Brownian motion, random variables with polynomial tails, almost sure limit theorems, Dudley representation theorem, universal representation theorem, weak characterization of stochastic integrability, Doob representation theorem}

\subjclass[2000]{Primary: 60H05 Secondary: 28C20, 60B11, 60F99}

\thanks{The first named author is
supported by the GA\v CR grant P201/10/0752. The second author is supported by VENI subsidy 639.031.930
of the Netherlands Organisation for Scientific Research (NWO)}

%28C20  (1980-now) Set functions and measures and integrals in infinite-dimensional spaces (Wiener measure, Gaussian measure, etc.) [See also 46G12, 58C35, 58D20, 60B11]

%60B11  (1980-now) Probability theory on linear topological spaces [See also 28C20]

%60H05  (1973-now) Stochastic integrals
%60F99  (1973-now) None of the above, but in this section

\begin{abstract}
In this paper we consider stochastic integration with respect to cylindrical Brownian motion in infinite dimensional spaces. We study weak characterizations of stochastic integrability and present a natural continuation of results of van Neerven, Weis and the second named author. The limitation of weak characterizations will be demonstrated with a nontrivial counterexample. The second subject treated in the paper addresses representation theory for random variables in terms of stochastic integrals. In particular, we provide an infinite dimensional version of Dudley's  representation theorem for random variables and an extension of Doob's representation for martingales.
\end{abstract}

\maketitle

\section{Introduction}

Representations of martingales and random variables as stochastic integrals with respect to Wiener processes (or terminal values thereof) have been a classical issue of stochastic analysis. Representation results show the remarkable universality of the Brownian motion among all continuous local martingales and have a variety of applications in stochastic analysis including backward stochastic differential equations (e.g.\ \cite{HuPeng} and \cite{ParPeng}) and mathematical finance (e.g. in hedging, nonlinear pricing, continuous trading or optimal portfolio, see \cite{CoHu}, \cite{ElQ}, \cite{HuKen}, \cite{Pli86}).

To be more precise, let $(\Omega,\mathscr A,\P)$ be a probability space with filtration $\F = (\mathscr F_t)_{t\geq 0}$ and $W$ be a Brownian motion with respect to $\F$. One typically considers an $\mathscr F_\infty$-measurable random variable $\xi$ and asks whether there exists a jointly measurable adapted process $\phi$ with paths in $L^2(\R_+)$ such that $\int_0^\infty\phi(s)\,dW(s)=\xi$. Let us refer to this as to the ``terminal value representation problem'' hereafter. Often one also considers a continuous local $\F$-martingale $M$ with $M(0) =0$ and of a given quadratic variation of the form $\langle M\rangle_t=\int_0^t |\phi(s)|^2 \,ds$ and asks whether there exists a Brownian motion $W$ with respect to $\F$ such that $M_t=\int_0^t \phi(s)\,dW(s)$ for every $t\ge 0$. Let us refer to this as to the ``martingale representation problem''.
One may also consider a third kind of problems (which we will not pursue in this paper), when $M$ is a local $\F^W$-martingale with $M(0) = 0$ and the question is, whether there exists a stochastically integrable process $\phi$ such that $M_t=\int_0^t \phi(s)\,dW(s)$ for every $t\ge 0$. Let us refer to this as to the ``Brownian martingale representation problem'' and let us remark that this issue goes historically hand in hand with the terminal value representation problem.

As far as the terminal value representation problem is concerned, in the scalar case, when the terminal random variable is $\mathscr F^W_\infty$-measurable, consider the mapping $I:L^p_{\mathscr F^W}(\Omega;L^2(\R_+))\to L^p(\Omega,\mathscr F^W_\infty,\P)$ defined by
\[I \phi = \int_0^\infty \phi \, d W.\]
Then it is known that
\begin{enumerate}[(i)]
\item $I$ is an isomorphism onto the space of centered terminal states in $L^p(\Omega,\mathscr F^W_\infty,\P)$ for $p\in (1, \infty)$,
\item $I$ is a surjection when $0\le p<1$,
\item $I$ is an isomorphism onto the Hardy space $H^1$ in the sense of Garcia \cite{Garsia} when $p=1$.
\end{enumerate}
See Garling \cite{GarlingRange} for $p\in[0,\infty)\setminus\{1\}$ and \cite{BD63}, \cite{DG78}, \cite{Gil86}, \cite{Jacka88}, \cite{Ob04}, \cite{RY} for $p=1$. The case $p=2$ is due to It\^o \cite{Ito51} and Kunita-Watanabe \cite{KuWa} and $p=0$ to Dudley \cite{Dud77}.
The terminal value representation problem and the Brownian martingale representation problem have been recently studied and resolved for $E$-valued terminal conditions $\xi\in L^p(\O,\F^W_\infty;E)$  for $p\in (1, \infty)$ in \cite{NVW1} by van Neerven, Weis and the second named author under the assumption that $E$ is a \umd Banach space (see explanation below). As a consequence, in full generality of the \umd-setting it holds that if terminal random variables can be represented then local $\F^W$-martingales have continuous modifications.

The martingale representation problem was first solved by Doob \cite{Doob} in one dimension (extensions to finite dimensions are nowadays standard). Generalizations to continuous local martingales in Hilbert spaces have been given in \cite{LeOu} and \cite{Ou} by L{\'e}pingle and Ouvrard, to complete nuclear spaces in \cite{KoMart} by K{\"o}rezlio{\u{g}}lu and Martias and to Banach spaces in \cite{Dett90} by Dettweiler and in \cite{Ondr07} by the first named author. The martingale representation theorem has plenty of applications in the theory of stochastic equations, e.g.\ in the proof of coincidence of weak solutions of stochastic evolution equations and of solutions of the associated martingale problem of Stroock and Varadhan and in proofs of existence of weak solutions of stochastic evolution equations by compactness methods, see e.g.\ \cite{BG99}, \cite{CaG}, \cite{FlG} or \cite{GG94}.

In the setting of infinite dimensional spaces, i.e. when the terminal values or martingales take values in a Hilbert space or a Banach space, representation problems become more difficult and it turns out that they are closely connected with the problem of weak characterization of stochastic integrability (which do not appear when working in $\R^d$ case). Let us demonstrate this with the following example. Let $E$ be a Hilbert space, $W$ a standard Brownian motion, $p,q\in (0,\infty)$, $\phi$ a strongly $\F$-progressively measurable $E$-valued process such that
\begin{equation}\label{eq:introweak1}
\E\,\left(\int_0^1|\langle \phi(s), x\rangle|^2\,ds\right)^\frac p2<\infty,\qquad x\in E
\end{equation}
and $\xi\in L^q(\Omega;E)$ satisfies, for all $x\in E$, almost surely
\begin{equation}\label{eq:introweak2}
\int_0^1\langle \phi(s),x\rangle\,dW(s)=\langle \xi,x\rangle.
\end{equation}
Does this imply that $\phi$ is stochastically integrable in $E$ and, in particular, that $\int_0^1\phi(s)\,dW(s)=\xi$ almost surely? Surprisingly this is true for $p=q>1$ as proved in \cite{NVW1} in a more general setting which will be explained below. In particular, for $p=q>1$, \eqref{eq:introweak1} and \eqref{eq:introweak2} imply that $\phi\in L^p(\O;L^2(0,1;E))$ and then also $\int_0^1\phi(s)\,dW(s)=\xi$ (thus weak=strong). We will show that there is an extension to $p=q=1$ of this results. Moreover, we complete the picture by giving a counterexample in the case where $p<1$ and $q$ is arbitrary. Hence, the only cases which remain unanswered are $p\in [1, \infty)$ and $q\in (0,1]$.

In the already mentioned work \cite{NVW1}, a theory of stochastic integration for processes with values in a \umd Banach space $E$ has been developed using ideas of \cite{Ga1} by Garling and \cite{MC} by McConnell. The class of \umd Banach space have been extensively studied by Burkholder in the eighties (see \cite{Bu3} and references therein). The spaces $L^p$ and Sobolev spaces $W^{s,p}$ with $p\in (1, \infty)$ and $s\in \R$ are examples of spaces which have \textsc{umd}. As a rule of thumb one could say that all ``classical'' reflexive spaces have \textsc{umd}. The $L^p$ and $W^{s,p}$-spaces play a central role in the theory of stochastic evolution equations. In \cite{NVW10} it has been shown how the stochastic integration theory can be combined with functional calculus tools to obtain maximal regularity results for large classes of stochastic PDEs.

Solutions to stochastic evolution equations and in particular stochastic PDEs are often formulated in a weak sense (see \cite{DPZ} and \cite{Kry}). Therefore, weak characterizations of stochastic integrals can be helpful in obtaining stochastic integrability. The weak characterizations of \cite{NVW1} have already played an important role in proving vector-valued versions of It\^o's formula in \cite{BNVW}. Moreover, for a concrete SPDE, stochastic integrability has been checked using this technique in \cite{BrzVer11}. Finally, let us mention that weak identities for stochastic integrals appear in the setting of martingale solutions to stochastic evolution equations (see \cite{KunzeMart} and \cite{Ondrejat04}).

\medskip

\noindent \textbf{Overview: }
The aim of this paper is to study weak characterizations of stochastic integrability in respect of representation theorems in Banach spaces. We prove positive results on weak characterizations in Theorem \ref{thm:bddpaths} and Corollary \ref{cor:L1} and a negative result in Theorem \ref{thm:main}.
We also prove in Theorem \ref{thm:Dudley} a general terminal value representation theorem in every Banach space and for every filtration (i.e. not necessarily for the Brownian one) and we give an example of a universal indefinite stochastic integral whose set of $L^0$-accumulation points at infinity coincides with $L^0(\Omega;E)$, which is new even in the scalar setting. Finally, we prove a combination of a martingale representation theorem and a weak characterization of stochastic integrability in \umd Banach spaces in Theorem \ref{thm:Doob}. Background material on stochastic integration and path regularity of martingales in Banach spaces is collected in the Appendix.

\medskip

\noindent \textbf{Convention:} Below all vector spaces will be considered over the real scalar field. As complex vector spaces can be viewed as real vector spaces, this is not really a restriction.
If not stated otherwise $(\O, \A,\P)$ is a probability space with a complete filtration $\F = (\F_t)_{t\geq 0}$. Moreover, $W$ and $W_H$ will be a Brownian motion and a cylindrical Brownian motion with respect to $\F$ respectively. Here $H$ is a separable real Hilbert space.

Let $E$ be a Banach space. We write $L^0(\O;E)$ for the space of strongly measurable mappings $\xi:\O\to E$. Note that $L^0(\O;E)$ is a complete metric space with translation invariant metric $\|\xi\|_{L^0(\O;E)} = \E(\|\xi\|\wedge 1)$. Moreover, convergence in $L^0(\O;E)$ coincide with convergence in probability.

Let $\calL(H,E)$ denote the bounded linear operators from $H$ into $E$. Let $T\in (0,\infty]$. A process $\Phi:[0,T]\times\O\to \calL(H,E)$ is called {\em $H$-strongly measurable} if for all $h\in H$, $(t,\omega)\mapsto \Phi(t,\omega) h$ is strongly measurable. One defines the notion of $H$-strongly adaptedness and $H$-strongly progressive measurability in a similar way.
Let $p\in [0,\infty)$. An $H$-strongly measurable process $\Phi:[0,T]\times\O\to \calL(H,E)$
is said to be {\it weakly} $L^p(\O;L^2(0,T;H))$ if for all $x^*\in E^*$, the process
$(\Phi^* x^*)(t,\omega) := \Phi(t,\omega)^*x^*$ belongs to $L^p(\O;L^2(0,T;H))$. In the special case that $H = \R$, we identify $\calL(H,E)$ with $E$ and in that case we also write $\lb \Phi, x^*\rb$ instead of $\Phi^* x^*$.

\section{A counterexample to weak characterizations of stochastic integrability\label{sec:counter}}
In our first result we give a counterexample to weak characterizations of stochastic integrals in a Hilbert space. Counterexamples in more general Banach space will be discussed in Remark \ref{rem:Banachunconditional}. Positive results for $p\geq 1$, will be presented in Section \ref{sec:weak}.

\begin{remark}
A general definition of the stochastic integral can be found in Definition \ref{def:stochint}.
In the special case that $E$ is a Hilbert spaces and $H = \R$ and $\phi:[0,T]\times\O\to E$ is an adapted and strongly measurable process,
the definition reduces to the classical setting and is equivalent to $\phi\in L^2(0,T;E)$ almost surely. Indeed, this follows from the standard fact that for a sequence of adapted step processes $\Big(\int_0^\cdot \phi_n \, dW\Big)_{n\geq 1}$ converges in $L^0(\O;C_b([0,T];E))$ if and only if $(\phi_n)_{n\geq 1}$ converges in in $L^0(\O;L^2(0,T;E))$ (see \cite[Proposition 17.6]{Kal} for the scalar case).
\end{remark}

\begin{theorem}\label{thm:main}
Let $E$ be an infinite dimensional Hilbert space. There exists a
strongly progressive process $\phi:[0,1]\times\O\to E$
which is weakly in $L^p(\O;L^2(0,1))$ for all $p\in [0,1)$ and satisfies for all $x\in E$,
\begin{equation}\label{eq:weak0}
\int_0^1 \lb \phi(t), x\rb \, d W(t) = 0, \ \text{almost surely},
\end{equation}
but $\|\phi\|_{L^2(0,1;E)} = \infty$ almost surely. In particular, $\phi$ is not stochastically integrable.
\end{theorem}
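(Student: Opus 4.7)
Fix an orthonormal basis $(e_n)_{n\ge 1}$ of $E$ and a partition of $[0,1)$ into disjoint sub-intervals $I_n = [a_n,b_n]$, for example $a_n = 1-2^{-(n-1)}$, $b_n = 1-2^{-n}$, with lengths $\ell_n = 2^{-n}$. The plan is to construct, for each $n$, a scalar adapted process $\phi_n$ supported in $I_n$ with $\int_{I_n}\phi_n\,dW = 0$ almost surely, and then put $\phi(t) := \sum_n \phi_n(t)\,e_n$. Because the supports are pairwise disjoint, at each $t$ at most one summand is nonzero, so $\phi(t)\in E$. Writing $Y_n := \|\phi_n\|_{L^2(I_n)}^2$, one immediately obtains $\|\phi\|_{L^2(0,1;E)}^2 = \sum_n Y_n$, $\|\lb\phi,x\rb\|_{L^2(0,1)}^2 = \sum_n x_n^2 Y_n$, and $\int_0^1\lb\phi(t),x\rb\,dW(t) = \sum_n x_n\int_{I_n}\phi_n\,dW = 0$ a.s.\ for every $x=\sum_n x_n e_n \in E$. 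The theorem thus reduces to exhibiting $(Y_n)$ with (i) $\sum_n Y_n = \infty$ almost surely and (ii) $\sum_n x_n^2 Y_n \in L^{p/2}(\O)$ for every $(x_n)\in \ell^2$ and every $p\in[0,1)$.

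To make $\int_{I_n}\phi_n\,dW = 0$ almost surely I use a Dambis–Dubins–Schwarz stopping trick on each $I_n$. Set $g_n(s) := (b_n-s)^{-1/2}$, so that $M_n(t) := \int_{a_n}^t g_n\,dW$ is a continuous local martingale whose bracket $\langle M_n\rangle_t = \log(\ell_n/(b_n-t))$ sweeps $[0,\infty)$ as $t\in[a_n,b_n)$; hence $M_n = B_n\circ\langle M_n\rangle$ for a standard Brownian motion $B_n$, and the $B_n$'s are mutually independent because the $I_n$ are disjoint and $W$ has independent increments. Choosing $\tau_n$ to be a zero of $M_n$ with bracket bounded below (so it is not the trivial zero $t=a_n$) defines $\phi_n := g_n\mathbf{1}_{[a_n,\tau_n]}$ with $\int_{I_n}\phi_n\,dW = M_n(\tau_n) = 0$ and $Y_n = \langle M_n\rangle_{\tau_n}$.

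The delicate part is arranging the dependence of the $Y_n$'s so that (ii) holds. The natural choice ``$\tau_n$ = first zero of $M_n$ with $\langle M_n\rangle_{\tau_n}\geq 1$'' produces $Y_n$ i.i.d.\ with tail $\P(Y_n>t)\asymp t^{-1/2}$, and a Borel–Cantelli II argument applied e.g.\ to $(x_n) = (1/(n\sqrt{\log n}))\in\ell^2$ shows $\sum_n x_n^2 Y_n = \infty$ a.s., which breaks (ii). The repair is to couple the $Y_n$'s through a common heavy-tailed variable: enlarge $(\O,\A,\P)$ so that $\F_0$ carries a random variable $Y$, independent of $W$, with $\E Y^\alpha<\infty$ iff $\alpha<1/2$, and calibrate the stopping rules to $Y$ so that $Y_n\leq C\cdot Y$ a.s.\ for a universal constant $C$. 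Concretely, one can take $\tau_n$ to be the first zero of $M_n$ whose bracket lies in $[\tfrac12, Y]$ when such a zero exists (which, conditionally on $Y\geq 1$, happens with probability at least $\tfrac12$ by the arcsine law, independently across $n$), and on the complementary event invoke an adapted fallback — for instance, a second DDS time-change on the tail of $I_n$ chosen to annihilate the residual $B_n(Y)$, truncated (and if necessary iterated) so that the extra bracket it contributes is at most a multiple of $Y$. The conditionally i.i.d.\ ``good'' events across $n$ force $\sum_n Y_n=\infty$ a.s.\ on $\{Y\ge 1\}$ via Borel–Cantelli, while the domination yields $\sum_n x_n^2 Y_n \leq CY\|x\|^2$, which lies in $L^{p/2}(\O)$ for every $p<1$ since $\E Y^{p/2}<\infty$. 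The hard part is implementing the adapted fallback so as both to preserve $\int_{I_n}\phi_n\,dW = 0$ and to keep $Y_n \leq CY$; once this is in place, (i) and (ii) follow directly, and the combined properties give the required counterexample.
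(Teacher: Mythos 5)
Your overall architecture --- disjoint intervals $I_n$, a scalar adapted $\phi_n$ on each $I_n$ with $\int_{I_n}\phi_n\,dW=0$ obtained by stopping a time-changed Brownian motion at a zero, quadratic variations $Y_n$ with tails $\P(Y_n>t)\asymp t^{-1/2}$, and $\phi=\sum_n\phi_ne_n$ along an orthonormal sequence --- is exactly the paper's, and your diagnosis that the unweighted i.i.d.\ version violates the weak $L^p$ bound (via Borel--Cantelli for $x_n=1/(n\sqrt{\log n})$) is correct. The gap is in your repair. The domination $Y_n\le CY$ with $Y$ an $\F_0$-measurable, a.s.\ finite random variable is not merely ``hard to implement''; it is impossible for a nonzero $\phi_n$. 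Indeed, for $A_k=\{Y\le k\}\in\F_{a_n}$ the process $\one_{A_k}\phi_n$ is adapted and satisfies $\E\int_{I_n}|\one_{A_k}\phi_n|^2\,ds\le Ck<\infty$, so the It\^o isometry applies and yields $\E\int_{I_n}|\one_{A_k}\phi_n|^2\,ds=\E\bigl(\one_{A_k}\int_{I_n}\phi_n\,dW\bigr)^2=0$; letting $k\to\infty$ forces $\phi_n=0$ a.e. This is the same mechanism as Remark \ref{rem:L1}: a vanishing stochastic integral whose square function is conditionally integrable must have a vanishing integrand. Consequently no adapted fallback can simultaneously annihilate the residual and keep $Y_n\le CY$; the conditional non-integrability of $Y_n$ given $\F_{a_n}$ is forced by the requirement $\int_{I_n}\phi_n\,dW=0$, not an accident of the construction. (A secondary issue: your divergence argument only works on $\{Y\ge1\}$, whereas the theorem asks for a.s.\ divergence.)

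The paper's fix is different and does not touch the independence of the $Y_n$: it inserts deterministic decaying weights. With $\xi_n:=Y_n^{1/2}$ normalized so that $\P(\xi_n>t)\asymp(1+t)^{-1}$ and $c_n=n^{-1}$, one takes $\phi=\sum_n c_n\widehat\phi_ne_n$ where $\|\widehat\phi_n\|_{L^2(0,1)}=\xi_n$. Then $\|\phi\|^2_{L^2(0,1;E)}=\sum_nc_n^2\xi_n^2\ge\sup_nc_n^2\xi_n^2$, which is a.s.\ infinite by Lemma \ref{lem:charact} precisely because $\sum_nc_n=\infty$; while for $x\in E$ and $p\in(2/3,1)$ the elementary bound $\|y\|_{\ell^2}\le\|y\|_{\ell^p}$ gives
$\E\|\lb\phi,x\rb\|^p_{L^2(0,1)}\le\sum_n|x_n|^pc_n^p\,\E\xi_1^p\le C_p\|x\|^p_{\ell^2}\|(c_n)\|^p_{\ell^{2p/(2-p)}}<\infty$,
using $\E\xi_1^p<\infty$ for $p<1$ and H\"older with $\frac p2+\frac1r=1$. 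The whole dichotomy thus lives in the deterministic weights ($\sum_nc_n=\infty$ versus $(c_n)\in\ell^q$ for every $q>1$). If you replace your coupling step by these weights, the rest of your argument goes through essentially as written.
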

Some details on stochastic integration theory of vector-valued processes can be found in Appendix \ref{sec:stochint} where one should take $H = \R$, and $W_H = W$.

\begin{remark}\label{rem:L1}
If $\phi$ is weakly $L^1(\O;L^2(0,1))$ and satisfies \eqref{eq:weak0}, then $\phi=0$. Indeed, it follows from \eqref{eq:weak0} and the martingale property of the stochastic integral that for all $x\in E$ and all $t\in [0,1]$, $\int_0^t \lb \phi(s), x\rb \, d W(s) = \E\Big(\int_0^1 \lb \phi(s), x\rb \, d W(s)\Big|\F_t\Big) =0$ almost surely. Therefore, the Burkholder-Davis-Gundy inequality (see \cite[Proposition 17.7]{Kal}) yields that for all $x\in E$, $\|\lb \phi, x\rb\|_{L^1(\O;L^2(0,1))} =0$ and hence $\phi = 0$ a.s.
\end{remark}

From \cite{Dud77}, one can deduce that there exists a progressively measurable process $\psi\in L^0(\O;L^2(0,1))$ which satisfies
(see also \cite[p.\ 196]{Steele})
\[\int_0^{1/2} \psi \, d W = W(1/2) \ \ \ \  \text{and} \ \ \ \int_0^1 \psi \, dW =0.\]
This process plays an important role in the construction in Theorem \ref{thm:main}. Before we turn to the proof we present two lemmas. In the first lemma, we calculate the distribution of $\xi=\|\psi\|_{L^2(0,1)}^2$ using arguments of \cite{Dud77}, where the tail of $\xi$ was estimated from above.

Let the function $f_b:[0,b)\to \R$ be given by $f_b(x) = (b-x)^{-1}$ if $0<b<\infty$ and $f_b(x) = 1$ if $b=\infty$.
\begin{lemma}\label{lem:distr}
Let $0\leq a<b\leq \infty$. Let $\eta:\O\to \R$ be $\F_a$-measurable. Let $Y:[a, b)\times\O\to \R$, the stopping time $\tau$ and $h:[a,b)\to \R$ be defined by
\[Y(r) = \int_{a}^r f_b(s) \, d W(s), \ \ \tau = \inf\Big\{r\geq a: Y(r) =
\eta \Big\}, \ \ \text{and} \ \  h(r) = \int_{a}^r f_b(s)^2  \, ds\]
Then $a\leq \tau<b$ a.s.\ and
\[\sqrt{2/(\pi e)} \,  \E \min\{|\eta|/\sqrt{t}, 1\}\leq \P(h(\tau)>t) \leq \E \min\{|\eta|/\sqrt{t}, 1\},  \ \ t>0.\]
\end{lemma}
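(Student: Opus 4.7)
\emph{Proof plan.} The strategy is to reduce the distribution of $h(\tau)$ to that of the first hitting time of the level $\eta$ by a Brownian motion independent of $\eta$, and then to evaluate it via the reflection principle.

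First I would observe that $h\colon[a,b)\to[0,\infty)$ is continuous, strictly increasing, and $h(r)\to\infty$ as $r\uparrow b$, both in the case $b<\infty$ (where $h(r)=(b-r)^{-1}-(b-a)^{-1}$) and in the case $b=\infty$ (where $h(r)=r-a$). Since $Y$ is a continuous martingale on $[a,b)$ with quadratic variation $h$, the Dambis--Dubins--Schwarz theorem produces a standard Brownian motion $B$ on $[0,\infty)$ such that
\[
Y(r)=B(h(r)),\qquad r\in[a,b).
\]
Because $f_b$ is deterministic, $Y$ is measurable with respect to the $\sigma$-algebra generated by the increments of $W$ after time $a$ and hence independent of $\F_a$. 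The same then holds for $B$, and in particular $B$ is independent of $\eta$.

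Using $Y=B\circ h$ and the monotonicity of $h$, the hitting time rewrites as
\[
h(\tau)=\inf\{s\ge 0:B(s)=\eta\}=:T^B_\eta.
\]
Recurrence of one-dimensional Brownian motion, combined with the independence of $B$ and $\eta$, yields $T^B_\eta<\infty$ a.s., and therefore $a\le\tau<b$ a.s. Conditioning on $\eta$ and applying the reflection principle then gives
\[
\P(h(\tau)>t)=\E\,\P\bigl(|Z|<|\eta|/\sqrt{t}\bigm|\eta\bigr),\qquad t>0,
\]
where $Z\sim N(0,1)$ is taken to be independent of $\eta$.

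The proof is concluded by the elementary two-sided estimate
\[
\sqrt{2/(\pi e)}\,\min\{x,1\}\le \P(|Z|<x)\le\min\{x,1\},\qquad x\ge 0,
\]
applied pointwise with $x=|\eta|/\sqrt t$ and integrated. I would prove the estimate by splitting at $x=1$ and exploiting monotonicity of the standard normal density $\varphi$, together with the identities $2\varphi(1)=\sqrt{2/(\pi e)}$ and $2\varphi(0)=\sqrt{2/\pi}<1$. The only point requiring any care is the independence of the time-changed Brownian motion $B$ from $\eta$, which is precisely where the $\F_a$-measurability hypothesis on $\eta$ enters; every other ingredient is classical.
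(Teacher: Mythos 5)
Your proposal is correct and takes essentially the same route as the paper: both reduce $h(\tau)$ to the first passage time of a standard Brownian motion to the independent level $\eta$ (the paper via the distributional identity $Y\overset{d}{=}W\circ h$ and the explicit first-passage density, you via Dambis--Dubins--Schwarz and the reflection principle, which is the same classical fact), and both finish by conditioning on $\eta$ and applying the identical two-sided Gaussian estimate $\sqrt{2/(\pi e)}\,\min\{x,1\}\leq \P(|Z|<x)\leq \min\{x,1\}$. The only (minor) point you leave implicit that the paper also only cites is the verification that $\tau$ is a stopping time with $\tau<b$ a.s., which your recurrence argument does cover.
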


\begin{proof}
One has that $\tau$ is a stopping time with $a\leq \tau<b$ a.s. (see \cite[p. 192]{Steele}).
As in \cite[p. 192]{Steele} one sees that $Y$ and $(W(h(t)))_{t\in
[a,b)}$ are identically distributed. Moreover, $\eta$ and $Y$ are independent. Let $\mu_{\eta}$ be the distribution function of $\eta$.
It follows that
\begin{align*}
\P(h(\tau)>t) &= \P(\tau>h^{-1}(t))
\\ & = \P\big(Y(s) \neq \eta \ \text{for all $s\in [a, h^{-1}(t)]$}\big)
\\ & = \P\big(Y(h^{-1}(s)) \neq \eta \ \text{for all $s\in [0,t]$}\big)
\\ & = \int_{-\infty}^\infty \P\big(Y(h^{-1}(s))  \neq x \ \text{for all $s\in [0,t]$}\big) \, d\mu_{\eta}(x)
\\ & = \int_{-\infty}^\infty \P\big(W(s) \neq x \ \text{for all $s\in [0,t]$}\big) \, d\mu_{\eta}(x)
= \int_{-\infty}^\infty \P(\tau_x>t) \, d\mu_{\eta}(x)
\end{align*}
where $\tau_x = \inf\{s>0: W(s) = x\}$ for $x\in \R$. The random variable
$\tau_x$ has density $f_x(s) = (2\pi)^{-1/2} s^{-3/2} |x|
\exp(-x^2/(2s))$ (see \cite[Section III.3]{RY}) for $x\ne 0$ and $\tau_0=0$ a.s.
It follows that
\begin{align*}
\P(h(\tau)>t) & = (2\pi)^{-1/2}\int_{-\infty}^\infty \int_t^\infty  s^{-3/2} |x| \exp(-x^2/(2s))  \, ds \, d\mu_{\eta}(x)
\\ & = 2^{1/2} \pi^{-1/2}\int_{-\infty}^\infty   \int_{0}^{|x|/\sqrt{t}}    \exp(-r^2/2) \, dr \, d\mu_{\eta}(x)
\\ & \leq   2^{1/2} \pi^{-1/2}\int_{-\infty}^\infty  \min\{|x|/\sqrt{t}, (\pi/2)^{1/2}\} d\mu_{\eta}(x)
\end{align*}
and
\[\P(h(\tau)>t)  \geq 2^{1/2} \pi^{-1/2} e^{-1/2} \int_{-\infty}^\infty  \min\{|x|/\sqrt{t}, 1\} d\mu_{\eta}(x).\]
Clearly, the result follows from these two estimates.
\end{proof}

\begin{remark}\label{rem:Gaussian}
It follows from the proof of Lemma \ref{lem:distr} that if $\eta$ is a centered Gaussian random variable with $\E\eta^2=\sigma^2$, then the assertion of the lemma can be strengthened to
\begin{align*}
\P(h(\tau)>t)  & = \pi^{-1}\sigma^{-1}\int_{0}^\infty \int_t^\infty  s^{-3/2} x \exp(-x^2/(2s))  \, ds \, e^{-x^2 \sigma^{-2}/2} \, dx
\\ &= \pi^{-1}\sigma^{-1}\int_t^\infty   s^{-3/2} \frac{1}{\frac 1{\sigma^2}+\frac1s} \, ds
= 2/\pi\int_0^{\sigma/\sqrt{t}}  \frac{1}{1+u^2} \, du = 2/\pi \arctan(\sigma/\sqrt{t}).
\end{align*}
\end{remark}

In the next lemma we study summability behavior of independent sequences of positive random variables with polynomial tails. In particular, the random variable $h(\tau)^{1/2}$ of Lemma \ref{lem:distr} often satisfies the conditions below.

\begin{lemma}\label{lem:charact}
Let $(\xi_n)_{n\geq 1}$ be a sequence of independent $[0,\infty)$-valued random variables for which there is a constant $C>0$ such that
\begin{align}\label{eq:probineqtail}
C^{-1} (t+1)^{-1}  \leq \P(\xi_1 >t) \leq C (t+1)^{-1}, \ \ \  \ t>0
\end{align}
For a sequence $(c_n)_{n\geq 1}$ in $(0,1]$ one has $\sup_{n\geq 1} c_n \xi_n<\infty$ a.s.\ if and only if $\sum_{n\geq 1} c_n<\infty$.
\end{lemma}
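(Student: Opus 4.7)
The plan is to argue both directions via the Borel--Cantelli lemmas, using the independence of the $\xi_n$ together with the two-sided tail bound. The key observation is that because $c_n\in(0,1]$, we have for every $K>0$ the comparison
\[
\P(c_n\xi_n>K)=\P\bigl(\xi_n>K/c_n\bigr)\asymp \bigl(K/c_n+1\bigr)^{-1}=\frac{c_n}{K+c_n},
\]
with constants depending only on $C$. This quantity is therefore comparable to $c_n$, uniformly in $n$ for $K$ fixed, and the question ``$\sup_n c_n\xi_n<\infty$ a.s.?'' is reduced to the question of whether $\{c_n\xi_n>K\}$ can occur infinitely often.

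For the ``if'' direction, assume $\sum_n c_n<\infty$. Taking $K=1$ in the upper tail bound yields
\[
\P(c_n\xi_n>1)\leq \frac{Cc_n}{1+c_n}\leq Cc_n,
\]
so $\sum_n\P(c_n\xi_n>1)<\infty$. By the first Borel--Cantelli lemma, almost surely only finitely many indices $n$ satisfy $c_n\xi_n>1$. Since each $\xi_n$ is a.s. finite (as $\P(\xi_n>t)\to 0$), the maximum over these finitely many exceptional indices is a.s. finite, and hence $\sup_n c_n\xi_n<\infty$ almost surely.

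For the converse, assume $\sum_n c_n=\infty$ and fix any $K>0$. The lower tail bound gives
\[
\P(c_n\xi_n>K)\geq \frac{c_n}{C(K+c_n)}\geq \frac{c_n}{C(K+1)},
\]
so $\sum_n\P(c_n\xi_n>K)=\infty$. Since the events $\{c_n\xi_n>K\}$ are independent, the second Borel--Cantelli lemma yields $c_n\xi_n>K$ infinitely often almost surely, hence $\sup_n c_n\xi_n\geq K$ a.s. Applying this for $K$ ranging over the positive integers and intersecting the full-measure sets gives $\sup_n c_n\xi_n=\infty$ almost surely, which is the contrapositive of the desired implication.

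There is essentially no obstacle here: the two-sided polynomial tail assumption is precisely what is needed to make the Borel--Cantelli criterion compare cleanly to summability of $(c_n)$, and the hypothesis $c_n\leq 1$ is only used to absorb $c_n$ into the constant in the denominators. One subtle point worth mentioning in the write-up is that the second Borel--Cantelli lemma requires independence of the events $\{c_n\xi_n>K\}$, which is automatic since $(\xi_n)_{n\ge 1}$ is an independent sequence.
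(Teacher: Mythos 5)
Your proof is correct, but it follows a genuinely different route from the paper. You run both implications through the Borel--Cantelli lemmas applied to the events $\{c_n\xi_n>K\}$, whose probabilities the two-sided tail bound makes comparable to $c_n/(K+c_n)$; the forward direction is Borel--Cantelli I plus the observation that finitely many exceptional indices cannot spoil finiteness of the supremum, and the converse is the contrapositive via Borel--Cantelli II and intersection over integer $K$. The paper instead works directly with the exact product representation
\[
\P\Big(\sup_{n\geq 1} c_n\xi_n\leq\lambda\Big)=\prod_{n\geq 1}\bigl(1-p_n(\lambda)\bigr)=\exp\Big(\sum_{n\geq 1}\log\bigl(1-p_n(\lambda)\bigr)\Big),\qquad p_n(\lambda)=\P(\xi_n>c_n^{-1}\lambda),
\]
bounding $-\log(1-p_n(\lambda))$ above by $2p_n(\lambda)$ for large $\lambda$ to get the quantitative lower bound $\P(\sup_n c_n\xi_n\leq\lambda)\geq e^{-K/\lambda}$ (and letting $\lambda\to\infty$), and in the converse fixing a $\lambda$ with $\P(\sup_n c_n\xi_n<\lambda)\geq 1/2$ and using $\log(1-p)\leq -p$ to force $\sum_n c_n\leq C\lambda\log 2$. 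The two arguments are close cousins --- Borel--Cantelli II is itself proved via this product --- but yours is the more elementary and standard one, while the paper's yields an explicit rate for $\P(\sup_n c_n\xi_n\leq\lambda)$ and an explicit bound on $\sum_n c_n$ in terms of a median of the supremum, which is the kind of quantitative information hinted at in the remark following the lemma. One cosmetic point: like the paper's proof, yours tacitly applies the tail bound \eqref{eq:probineqtail} to every $\xi_n$, not just $\xi_1$; this matches the intended reading (the $\xi_n$ are i.i.d.\ in the application), but it is worth stating that you are using the bound for all $n$.
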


\begin{remark} \
\begin{enumerate}
\item The equivalence of Lemma \ref{lem:charact} holds for any sequence of positive numbers $(c_n)_{n\geq 1}$, but this requires an additional argument and we will not need this generality. One can also show that the assertions of Lemma \ref{lem:charact} are equivalent to: for all $p\in (1, \infty]$, $\|(c_n \xi_n)_{n\geq 1}\|_{\ell^p}<\infty$ a.s. Moreover, this result is sharp in the sense that for $p=1$ there are counterexamples.
\item For the case of equivalences of moments of random variables such as the above ones, we refer to \cite{GoLiScWe} for detailed discussions.
\end{enumerate}
\end{remark}

\begin{proof}[Proof of Lemma \ref{lem:charact}]
Let $p_n(\lambda) = \P(\xi_n>c_n^{-1} \lambda)$ for $\lambda>0$. Observe that for all $\lambda>0$ one has
\begin{equation}\label{eq:prodsup}
\P(\sup_{n\geq 1} c_n \xi_n\leq \lambda)
 =  \prod_{n\geq 1} (1 - p_n(\lambda)) =\exp\Big(\sum_{n\geq 1}  \log\big(1 - p_n(\lambda) \big)\Big).
\end{equation}

Now first assume $\sum_{n\geq 1} c_n<\infty$. Let $\lambda> 2C$. Then $0\leq p_n(\lambda)\leq  C (c_n^{-1} \lambda+1)^{-1}<1/2$ for all $n\geq 1$. Therefore, one has
\[-\log\big(1 - p_n(\lambda) \big)\leq 2p_n(\lambda) \leq 2C (\lambda c_n^{-1}+1)^{-1}\leq 2C \lambda^{-1} c_n.\]
It follows that
\[\sum_{n\geq 1} \log\big(1 - p_n(\lambda) \big) \geq -\frac{c}{2} \lambda^{-1} \sum_{n\geq 1} c_n = -K\lambda^{-1},\]
where $K$ is a constant not depending on $\lambda$. From \eqref{eq:prodsup} we obtain that
\[\P(\sup_{n\geq 1} c_n \xi_n\leq \lambda) \geq \exp(-K\lambda^{-1}).\]
Letting $\lambda\to \infty$, one sees that $\P(\sup_{n\geq 1} c_n \xi_n<\infty)=1$ and the result follows.

Next assume $\sup_{n\geq 1} c_n \xi_n<\infty$ almost surely. Choose $\lambda\geq 1$ such that $\P(\sup_{n\geq 1} c_n \xi_n<\lambda)\geq 1/2$. Then it follows that the series on the right-hand side of \eqref{eq:prodsup} converges and moreover
\[\log(1/2) \leq \sum_{n\geq 1}  \log\big(1 - p_n(\lambda) \big) \leq -\sum_{n\geq 1} p_n(\lambda) \leq - C^{-1} \sum_{n\geq 1}  (c_n^{-1} \lambda +1)^{-1} \leq -C^{-1} \lambda^{-1}\sum_{n\geq 1}  c_n.\]
Hence $\sum_{n\geq 1}  c_n\leq C\lambda \log(2)$.
\end{proof}

Next we turn to the proof of Theorem \ref{thm:main}. The process $\phi$ will be constructed in such a way that the interval $(0,1)$ will be decomposed to an infinite number of random subintervals, and on each of theses subintervals, $\phi$ will be a process so that the stochastic
integral over $(0,1)$ vanishes and, simultaneously, the $L^2(0,1;E)$-norm explodes.

\begin{proof}[Proof of Theorem \ref{thm:main}]
Let $(I_n)_{n\geq 1}$ be nonempty open disjoint intervals in $(0,1)$.
Write $I_n = (s_n, b_n)$ and let $a_n$ be the center of $I_n$ for each $n\geq 1$.

Now for each $n\geq1$, let $Y_n:[a_n, b_n)\times\O\to \R$, the $\F^{W}$-stopping time $\tau_n$ and $h_n:[a_n,b_n)\to \R$ be defined
as in Lemma \ref{lem:distr} with $a = a_n$, $b = b_n$ and $\eta = W(a_n) -W(s_n)$.
Then $a_n\leq \tau_n<b_n$ a.s.\ and by Remark \ref{rem:Gaussian} one has that
\begin{align*}
\P(\theta_n>t) = \P(h_n(\tau_n) >(a_n-s_n)t) = \frac{2}{\pi} \arctan(1/\sqrt{t}), \ \ \ t>0,
\end{align*}
where $\theta_n = h_n(\tau_n)/(a_n-s_n)$.
Moreover, $(\theta_n)_{n\geq 1}$ is sequence of independent random variables.
Let $\phi_n:\R\to \R$ be defined by
\[\phi_n = -\one_{[s_n, a_n)}(s) + \one_{[a_n,\tau_n]}(s)\frac{1}{b_n-s}.\]
Then $\phi_n$ is $\F^W$-progressively measurable, $\|\phi_n\|_{L^2(0,1)}^2 = (a_n-s_n)(1+\theta_n)$ and
\begin{align*}
\int_{0}^1 \phi_n \, dW& = \int_0^{1} \Big(-\one_{[s_n, a_n)}(s) + \one_{[a_n,\tau_n]}(s) (b_n-s)^{-1} \Big)\, dW(s)\\&  = -(W(a_n) - W(s_n)) + W(a_n)-W(s_n) = 0.
\end{align*}
Letting $\xi_n := (1+\theta_n)^{1/2}$, one sees that $(\xi_{n})_{n\geq 1}$ satisfies the conditions of Lemma \ref{lem:charact} with $\alpha = 1$. Let $c_n = n^{-1}$.
Let $(e_n)_{n\geq 0}$ be an orthonormal sequence in $E$. Let $\phi:[0,1]\to E$ be defined by
\[\phi = \sum_{n\geq 1} (a_n -s_n)^{-1/2} c_n \phi_n e_n.\]
Let $p\in (0,1)$. We prove that $\phi$ is weakly $L^p(\O;L^2(0,1))$. By Jensen's inequality it suffices to consider $2/3<p<1$. Since the $(\phi_n)_{n\geq 0}$ have disjoint supports, we find that for all $x\in E$, (letting $x_n = \lb x, e_n\rb$ for $n\geq 1$)
\begin{equation}\label{eq:weakL2}
\begin{aligned}
\|\lb \phi, x\rb\|_{L^2(0,1)} & = \Big(\int_{0}^1 \Big|\sum_{n\geq 0} (a_n -s_n)^{-1/2} x_n c_n \phi_n \Big|^2 \, dt\Big)^{1/2}
\\ & = \Big(\sum_{n\geq 0} (a_n -s_n)^{-1}  x_n^2 c_n^2 \int_{0}^1 \phi_n^2 \, dt\Big)^{1/2}
\\ &  = \Big(\sum_{n\geq 0} x_n^2 c_n^2 \xi_n^2\Big)^{1/2}
 \leq \Big(\sum_{n\geq 0} |x_n|^p |c_n|^p \xi_n^p\Big)^{1/p}
\end{aligned}
\end{equation}
where we used $\|y\|_{\ell^2}\leq \|y\|_{\ell^{p}}$. Note that $C_p = \E(\xi_n^p) = \E(\xi_1^p)$ is finite. Taking $p$-th moments on both sides of the above estimate it follows that
\begin{equation}\label{eq:weakLpL2}
\E\|\lb \phi, x\rb\|_{L^2(0,1)}^p \leq C_p \sum_{n\geq 0} |x_n|^p |c_n|^p \leq C_p \|x\|_{\ell^2}^p \|(c_n)_{n\geq 1}\|_{\ell^{pr}}^p<\infty,
\end{equation}
were we applied H\"older's inequality with $\frac{p}{2} + \frac{1}{r} =1$, and we used $pr = \frac{2p}{2-p}>1$. Moreover,
\begin{equation}\label{eq:weakequal0}
\int_{0}^1 \lb \phi, x\rb \, d W = \int_0^1 \Big(\sum_{n\geq 0} (a_n -s_n)^{-1/2} x_n c_n \phi_n\Big) \, d W = \sum_{n\geq 0}  (a_n -s_n)^{-1/2}  c_n x_n  \int_0^1 \phi_n\, d W =0.
\end{equation}
Finally,
\begin{equation}\label{eq:notL2}
\int_{0}^1 \|\phi\|^2_E \, dt = \int_0^1 \sum_{n\geq 0} (a_n -s_n)^{-1} c_n^2 |\phi_n|^2 \, dt = \sum_{n\geq 0} (a_n -s_n)^{-1} c_n^2 \int_0^1  |\phi_n|^2 \, dt = \sum_{n\geq 0} c_n^2 \xi_n^2
\end{equation}
and the latter is a.s.\ infinite by Lemma \ref{lem:charact} in view of the embedding $\ell^2\hookrightarrow\ell^\infty$.
\end{proof}

\begin{remark}\label{rem:Banachunconditional}
The construction in Theorem \ref{thm:main} can be extended to the setting where $E$ is a \umd Banach space with a normalized unconditional basic sequence $(e_n)_{n\geq 1}$.  The statement of Theorem \ref{thm:main} should then be replaced by:
there is a progressively measurable $\phi:[0,1]\times\O\to E$ which is weakly in $L^p(\O;L^2(0,1))$ for all $p\in [0,1)$ and for all $x^*\in E^*$
\[\int_0^1 \lb \phi(t), x^*\rb \, d W(t) = 0, \ \text{almost surely}.\]
and $\|\phi\|_{\g(L^2(0,1),E)} = \infty$, almost surely. In particular, $\phi$ is not stochastically integrable (in the sense of Appendix \ref{sec:stochint}).
Not every Banach space contains an unconditional basic sequence (see \cite{GoMa}). However, spaces such as $E = L^w(S)$, where $(S,\Sigma, \mu)$ is a measure space and $w\in (1, \infty)$ are {\textsc{umd}.} Moreover, if $E = L^w(S)$ is infinite dimensional, it contains a normalized unconditional basic sequence $e_n = \mu(S_n)^{-1/w} \one_{S_n}$, with $(S_n)_{n\geq 1}$ of finite measure and disjoint.

Let us briefly explain the proof of the above statement. For precise definitions on type of Banach spaces and unconditionality we refer to \cite{AlKa}. Since $E$ is a \umd  space it follows from the Maurey-Pisier theorem that $E$ has type $s$ with constant $T_s$ for some $s\in (1, 2]$. Let $(r_n)_{n\geq 1}$ and $(\g_n)_{n\geq 1}$ be a Rademacher and standard Gaussian sequence on some probability space $(\O',\F',\P')$ respectively. We claim that for all $x^*\in E^*$ one has $(x_n)_{n\geq 1}$ is in $\ell^{s^\prime}$, where $x_n = \lb e_n, x^*\rb$ and where $s'$ denotes the conjugate exponent of $s$. Indeed, let $a\in \ell^{s}$, Then
\begin{align*}
\sum_{n\geq 1} a_n x_n & \leq \|x^*\| \Big\|\sum_{n\geq 1} a_n e_n \Big\| \leq K\|x^*\| \Big\|\sum_{n\geq 1} r_n a_n e_n \Big\|_{L^2(\O';E)} \\ & \leq T_s K\|x^*\| \Big(\sum_{n\geq 1} \|a_n e_n\|^s\Big)^{1/s} = T_s K\|x^*\| \, \|(a_n)_{n\geq 1}\|_{\ell^s},
\end{align*}
where $K$ is the unconditionality constant of $(e_n)_{n\geq 1}$. Taking the supremum over all $\|(a_n)_{n\geq 1}\|_{\ell^s}\leq 1$, the claim follows.

With the same notations as in Theorem \ref{thm:main}, \eqref{eq:weakL2} and \eqref{eq:weakequal0} with $\lb \phi, x\rb$ replaced by $\lb \phi, x^*\rb$, remain valid. Indeed, by Jensen's inequality it suffices to consider $\frac 1{2-s^{-1}}<p<1$. By the above claim one can replace \eqref{eq:weakLpL2} with
\begin{align*}
\E\|\lb \phi, x^*\rb\|_{L^2(0,1)}^p \leq C_p \sum_{n\geq 0} |x_n|^p |c_n|^p \leq C_p \|x\|_{\ell^{s'}}^p \|(c_n)_{n\geq 1}\|_{\ell^{pr}}^p<\infty,
\end{align*}
where we applied H\"older's inequality with $\frac{p}{s'} + \frac1r =1$.

Finally, using Proposition \ref{prop:twosided} and the definition of the $\g$-norm, \eqref{eq:notL2} can be replaced by
\[\|\phi\|_{\g(L^2(0,1),E)} = \Big\|\sum_{n\geq 0} \gamma_n  c_n \xi_n  e_n \Big\|_{L^2(\O';E)} \geq \sup_{n\geq 1} c_n \xi_n\]
and the latter is a.s.\ infinite by Lemma \ref{lem:charact}.
\end{remark}

\section{Dudley's representation theorem\label{sec:represenDudley}}

In \cite{Dud77}, Dudley obtained a representation theorem for real-valued random variables $\xi\in L^0(\O)$ which are $\sigma(W(t): t\leq 1)$-measurable where $W$ is a standard Brownian motion. In \cite{ESY}, this was further extended to other classes of martingales for more general filtrations which are not necessarily Brownian.
Obviously, Dudley's result extends to the case of $\xi\in L^0(\O;\R^d)$ using coordinatewise representations. It seems that an infinite dimensional version is not available yet and the original proof does not extend to this setting. Moreover, if $E=\ell^2$ a coordinatewise representation leads to divergent series in general. Below we provide a representation theorem in arbitrary Banach spaces. A brief survey of stochastic integration theory in Banach spaces and the definition of $\g(L^2(a,b),E)$ can be found in Appendix \ref{sec:stochint}.

\begin{theorem}\label{thm:Dudley}
Let $E$ be a Banach space and $0\leq a<b\leq \infty$. Let $\xi:\O\to E$ be strongly measurable. Then there exists a strongly progressively measurable process $\phi:[a,b]\times\O\to E$ which is stochastically integrable and satisfies
\[\int_a^b \phi \, d W = \xi\]
if and only if $\xi$ is $\sigma(\F_t: t<b)$-measurable.
\end{theorem}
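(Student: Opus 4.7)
The \emph{only if} direction is immediate: if $\xi=\int_a^b\phi\,dW$ for a stochastically integrable progressive $\phi$, the partial integrals $\int_a^t\phi\,dW$ are $\F_t$-measurable for every $t<b$ and converge in probability to $\xi$ as $t\uparrow b$, so $\xi$ is $\sigma(\F_t:t<b)$-measurable (the filtration being complete by convention).

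For the converse, the plan is to vectorise Dudley's scalar construction. Since $\xi$ is strongly measurable, I may assume $E$ is separable. Pick $a=t_0<t_1<\cdots$ with $t_n\uparrow b$ and build finitely valued $\F_{t_n}$-measurable $E$-valued random variables $\xi_n$ with $\|\xi_n-\xi\|_E\leq 2^{-n}$ a.s. This combines: (i) $E$-norm approximation of $\xi$ by $\sigma(\F_t:t<b)$-measurable simple functions; (ii) the martingale convergence $\E[\one_A|\F_t]\to\one_A$ a.s.\ as $t\uparrow b$ for each $A\in\sigma(\F_t:t<b)$; and (iii) a Borel--Cantelli step (shrinking $t_n$ toward $b$ if necessary) to upgrade convergence in probability to a.s.\ convergence. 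Setting $\xi_0:=0$ and $\zeta_n:=\xi_n-\xi_{n-1}$, each $\zeta_n$ is finitely valued, $\F_{t_n}$-measurable with $\|\zeta_n\|_E\leq 2^{-n+1}$ a.s., and $\xi=\sum_{n\geq 1}\zeta_n$ a.s.

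On each subinterval $(t_{n-1},t_n]$ I construct a progressive $\phi_n$ with $\int_{t_{n-1}}^{t_n}\phi_n\,dW=\zeta_n$ by reduction to the scalar case: write $\zeta_n=\sum_k y_{n,k}\one_{C_{n,k}}$ with disjoint $C_{n,k}\in\F_{t_n}$ and $\|y_{n,k}\|_E\leq 2^{-n+1}$ on nonempty atoms, apply the scalar Dudley representation theorem for the filtration $\F$ (cf.\ \cite{Dud77,ESY}) to each indicator $\one_{C_{n,k}}$ on $(t_{n-1},t_n]$ to obtain scalar progressive $\psi_{n,k}$ with $\int_{t_{n-1}}^{t_n}\psi_{n,k}\,dW=\one_{C_{n,k}}$ a.s., and set $\phi_n:=\sum_k y_{n,k}\psi_{n,k}$ (a finite $E$-valued sum). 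Paste $\phi:=\sum_n\phi_n\one_{(t_{n-1},t_n]}$; by linearity $\int_a^{t_n}\phi\,dW=\sum_{k\leq n}\zeta_k=\xi_n\to\xi$ a.s. The geometric decay $\|\zeta_n\|_E^2\leq 4^{-n+1}$ together with the polynomial tail estimate of Lemma \ref{lem:distr} for the scalar hitting-time norms $\|\psi_{n,k}\|_{L^2(t_{n-1},t_n)}^2$ yield, via Borel--Cantelli, $\sum_n\|\phi_n\|_{L^2(t_{n-1},t_n;E)}^2<\infty$ a.s., so $\phi$ is stochastically integrable on $[a,b]$ in the sense of Appendix \ref{sec:stochint} with integral $\xi$. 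In the general Banach setting this $L^2$-computation is replaced by an analogous $\g(L^2(a,b),E)$-norm estimate.

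The main obstacle is the $E$-valued representation on each subinterval. A coordinatewise reduction via a basis in $E$ would produce divergent series in infinite dimensions (as noted before the theorem statement); the indicator-decomposition plus scalar Dudley reduction above bypasses this but shifts the difficulty to controlling the total $L^2$- (or $\g$-) norm of the pasted $\phi$. It is precisely the $2^{-n}$ decay rate of the $E$-valued approximation, combined with the polynomial tail of Lemma \ref{lem:distr}, that makes this control possible. A secondary subtlety is upgrading approximation in probability to almost-sure approximation of $\xi$ by $\F_{t_n}$-measurable simple functions, which is handled by Borel--Cantelli.
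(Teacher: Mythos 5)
Your ``only if'' direction and your overall skeleton (approximate $\xi$ by random variables measurable at times $a_n\uparrow b$, represent the increments on successive subintervals by hitting-time integrands, paste, and control the total norm via the polynomial tails of Lemma \ref{lem:distr}) match the paper. The proof breaks, however, at the reduction to the scalar case via the atom decomposition $\zeta_n=\sum_{k\le K_n} y_{n,k}\one_{C_{n,k}}$, $\phi_n=\sum_k y_{n,k}\psi_{n,k}$. The number of atoms $K_n$ needed to approximate a general $\xi$ in an infinite dimensional $E$ within $2^{-n}$ is completely uncontrolled, while each $\|\psi_{n,k}\|_{L^2(t_{n-1},t_n)}$ has a Cauchy-type tail $\P(\cdot>t)\asymp t^{-1}$ and hence no first moment; consequently $\|\phi_n\|_{L^2(E)}\le 2^{-n+1}\sum_{k\le K_n}\|\psi_{n,k}\|_{L^2}$ is only of order $2^{-n}K_n$ (up to logarithms) in probability, and this need not be summable. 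Slowing the approximation rate does not help: if $K(\varepsilon)$ is the number of atoms needed for accuracy $\varepsilon$, then $\varepsilon K(\varepsilon)$ need not tend to $0$, so no choice of rates rescues the Borel--Cantelli step. Two further points: (a) your requirement $\|\xi_n-\xi\|_E\le 2^{-n}$ a.s.\ for each $n$ is not achievable with finitely valued $\F_{t_n}$-measurable approximants (only ``eventually a.s.'', or, as the paper uses, $\|\xi-\xi_n\|_{L^0(\O;E)}<4^{-n}$, which is all that is needed); (b) even granting $\sum_n\|\phi_n\|^2_{L^2(E)}<\infty$ a.s., in a general Banach space (no \umd assumption here) neither the $L^2(a,b;E)$-norm nor the $\g$-norm of $\phi$ yields stochastic integrability --- one must prove that the partial integral processes converge in $L^0(\O;C_b([a,b];E))$, and there is no Burkholder--Davis--Gundy or Doob-type inequality available to pass from integrand norms to sup-norms of the integrals.

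The missing idea, which is the whole point of the paper's proof, is an index shift that makes the increment \emph{start}-measurable on the interval where it is represented: the $\F_{a_n}$-measurable increment $\xi_n-\xi_{n-1}$ is represented on the \emph{next} interval $(a_n,a_{n+1})$. Then no atom decomposition is needed at all: one takes the single scalar hitting-time integrand of Lemma \ref{lem:distr} with $\eta=1$, which represents the deterministic constant $1$, and sets $\phi_n(t)=(\xi_n-\xi_{n-1})\one_{(a_n,\tau_n)}(t)(a_{n+1}-t)^{-1}$. This $\phi_n$ is a rank-one process, so it is trivially stochastically integrable in \emph{any} Banach space, $\|\phi_n\|_{\g(L^2(a,b),E)}=\|\phi_n\|_{L^2(a,b;E)}=\|\xi_n-\xi_{n-1}\|_E\,h_n(\tau_n)^{1/2}$ with the two factors independent, and $\|\zeta_n(t)\|=\bigl|\int_a^t\|\phi_n\|\,dW\bigr|$, so the scalar estimate of Proposition \ref{prop:L0cont} applies directly to give convergence of $\sum_n\zeta_n$ in $L^0(\O;C_b([a,b];E))$. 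The bound $\|\theta h(\tau)\|_{L^0(\O)}\le C\|\theta\|_{L^0(\O)}^{1/2}$ explains the $4^{-n}$ accuracy (rather than your $2^{-n}$) demanded of the approximants. I would encourage you to rework the construction along these lines; your pasting and tail-estimate machinery then goes through essentially unchanged.
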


\begin{remark}\
\begin{enumerate}
\item If $\F$ is left continuous or $b=\infty$ (e.g. Brownian case), then one has $\F_b=\sigma(\F_t: t<b)$.
\item It will be clear from the proof that $\phi\in L^0(\O;\g(L^2(a,b), E))\cap L^0(\O;L^2(a,b;E))$. Moreover, because of the special structure of the process, no geometric conditions on $E$ are required for its stochastic integrability.
\item Note that, as in the scalar case, the representation is not unique (see the text below Theorem \ref{thm:main}). Moreover, in general $\Big(\int_a^t \phi \, d W\Big)_{t\in[a,b]}$ will not be a martingale, but only a continuous local martingale.
\item If $\phi_1,\phi_2$ are $E$-valued, strongly measurable and adapted, weakly $L^1(\O;L^2(a,b))$ and satisfy
\[\int_a^b \phi_i \, d W = \xi,  \ \ \ i=1, 2,\]
then $\phi_1=\phi_2$ a.e. Indeed, this follows as in Remark \ref{rem:L1}.
\end{enumerate}
\end{remark}

\begin{proof}

{\em Step 1:} Let $Y$, $\tau$ and $h$ be defined as in Lemma \ref{lem:distr} with $\eta=1$. Then
\[C^{-1}\min\{t^{-1/2}, 1\}\leq \P(h(\tau)>t) \leq C \min\{t^{-1/2}, 1\},  \ \ t>0.\]
In particular, for any random variable $\theta:\O\to \R_+$ which is independent of $h(\tau)$, one has
\begin{equation}\label{eq:thetat}
C^{-1}\E \min\{\theta^{1/2} t^{-1/2}, 1\}\leq \P(\theta h(\tau)>t) \leq C\E \min\{\theta^{1/2} t^{-1/2}, 1\},  \ \ t>0.
\end{equation}
Note that
\begin{align*}
\int_0^1 \E \min\{\theta^{1/2} t^{-1/2}, 1\} \, dt &= \|\theta\|_{L^0(\O)} + \E \int_{\theta\wedge 1}^1 \theta^{1/2} t^{-1/2} \, dt
\\ & \leq \|\theta\|_{L^0(\O)} +  2  \E(\theta^{1/2}\wedge 1)  \leq 3 \|\theta\|_{L^0(\O)}^{1/2}
\end{align*}
where we used Jensen's inequality and $x\leq x^{1/2}$ for $x\in [0,1]$. It follows that (with a different $C$)
\begin{equation}\label{eq:L0afschatting}
C^{-1} \|\theta\|_{L^0(\O)}\leq \|\theta h(\tau)\|_{L^0(\O)} \leq C \|\theta\|_{L^0(\O)}^{1/2}.
\end{equation}

{\em Step 2:} Let $[a,b] := [a, \infty)$ if $b=\infty$. Now let $\xi:\O\to E$ be as in the theorem. Choose a sequence $(a_n)_{n\geq 1}$ in $[a,b]$ with $a_n\uparrow b$ and random variables $(\xi_n)_{n\geq 1}$ such that for each $n\geq 1$, $\xi_n$ is $\F_{a_n}$-measurable and $\|\xi - \xi_n\|_{L^0(\O;E)}<4^{-n}$. Let $\xi_0=0$.
For each $n\geq 1$, let $Y_n$, $\tau_n$ and $h_n$ be defined as in step 1 with $a=a_n$ and $b=a_{n+1}$. For each $n\geq 1$, define $\phi_n:[a,b]\to E$ by $\phi_n(t) = (\xi_{n} - \xi_{n-1})\one_{(a_{n}, \tau_{n})}(t) (a_{n+1}-t)^{-1}$. Then each $\phi_n$ is stochastically integrable and $\zeta_n(b)= \xi_n-\xi_{n-1}$, where $\zeta_n = \int_a^\cdot \phi_n \, d W$.
Note that
\[\eta_n:=\|(\xi_{n} - \xi_{n-1})\|_E h_n(\tau_n)^{1/2} = \|\phi_n\|_{\g(L^2(a,b), E)} = \|\phi_n\|_{L^2(a,b;E)}.\]
Since $\xi_n-\xi_{n-1}$ and $h_n(\tau_n)$ are independent it follows from \eqref{eq:L0afschatting} that
\begin{align*}
\sum_{n\geq 1} \|\eta_n\|_{L^0(\O)} & \leq C \sum_{n\geq 1} \|\xi_n - \xi_{n-1}\|_{L^0(\O)}^{1/2}
\\ & \leq C\|\xi_1\|_{L^0(\O;E)} + C \sum_{n\geq 2} (4^{-n} + 4^{-n+1})^{1/2} = C\|\xi_1\|_{L^0(\O;E)} + C 5^{1/2} 2^{-1}.
\end{align*}
By completeness it follows that $\sum_{n\geq 1} \eta_n$ converges in $L^0(\O)$. Therefore, the series $\phi := \sum_{n\geq 1} \phi_n$ converges (absolutely) in $\g(L^2(a,b), E)$ and in $L^2(a,b;E)$ almost surely. Since each $\phi_n$ is strongly progressively measurable it follows that $\phi$ is strongly progressively measurable.

Note that by the special form of $\phi_n$, for all $n\geq1$ one has
\[\|\zeta_n(t)\| = \Big|\int_0^t \|\phi_n\| \, d W\Big|,  \ \ \ t\in [a,b].\]
Let $\varepsilon\in (0,1)$ be arbitrary. Let $\delta_n = (2/3)^n \varepsilon$. Then by Proposition \ref{prop:L0cont} with $p=2$ and $\beta_{2,\R} = 1$ and $c_2=1$ we obtain
\begin{align*}
\P\Big(\|\zeta_n\|_{C_b([a,b];E)}>\varepsilon\Big) =  \P\Big(\sup_{t\in [a,b]}\Big|\int_a^t \|\phi_n\| \, d W\Big|>\varepsilon\Big) \leq (4/9)^n + \P((3/2)^n \eta_n >\varepsilon)
\end{align*}
For $n\geq2$, integrating over $\varepsilon\in (0,1)$ yields
\begin{align*}
\|\zeta_n\|_{L^0(\O;C_b([a,b];E))} & \leq (4/9)^{n} + \|(3/2)^n \eta_n\|_{L^0(\O)}\leq (4/9)^{n} + (3/2)^n \|\eta_n\|_{L^0(\O)}
\\ & \leq (4/9)^{n} + (3/2)^n C \|\xi_n- \xi_{n-1}\|_{L^0(\O;E)}^{1/2} \leq (4/9)^{n} + C (3/4)^n.
\end{align*}
Hence $\sum_{n\geq 2}\|\zeta_n\|_{L^0(\O;C_b([a,b];E))}<\infty$ and therefore $\sum_{n\geq 1} \zeta_n$ converges in $L^0(\O;C_b([a,b];E))$.

Since each $\phi_n$ is stochastically integrable and both
\begin{enumerate}[(a)]
\item $\sum_{n=1}^\infty \phi_n = \phi$ in $L^0(\O;L^2(a,b;E))$,
\item $\int_0^\cdot \sum_{n=1}^N \phi_n \, d W = \sum_{n=1}^N \zeta_n$ converges in $L^0(\O;C_b([a,b];E))$ as $N\to \infty$,
\end{enumerate}
it follows that $\phi$ is stochastically integrable, and moreover,
\[\int_a^b \phi\, d W = \sum_{n\geq 1} \zeta_n(b)  = \sum_{n\geq 1} \xi_n - \xi_{n-1} = \xi.\]
\end{proof}

\begin{remark}\label{rem:Dudley}
If additionally to the assumptions of Theorem \ref{thm:Dudley} one knows that $\xi\in L^p(\O;E)$ with $p\in (0,1)$. Then the process $\phi$ can be chosen in $L^p(\O;\g(L^2(a,b), E))\cap L^p(\O;L^2(a,b;E))$. Indeed, choose $(\xi_n)_{n\geq 1}$ such that $\xi_n$ is $\F_{a_n}$-measurable and $\|\xi - \xi_n\|_{L^p(\O;E)}^p<4^{-n}$.
From this one can deduce that $\sum_{n\geq 1} \phi_n$ converges $\phi$ in $L^p(\O;\g(L^2(a,b), E))\cap L^p(\O;L^2(a,b;E))$ and $\lambda\otimes \P$-a.s. Moreover,
\[\int_a^\cdot \phi \, d W = \sum_{n\geq 1} \int_a^\cdot \phi_n \, d W = \zeta \ \ \text{with convergence in $L^p(\O;C([a,b];E))$}.\]
This yields an infinite dimensional version of Garling's result \cite{GarlingRange}.
To prove the above facts it suffices to note that from \eqref{eq:thetat} with $\theta$ replaced by $\theta^2$ one obtains
\begin{equation}\label{eq:thetat2}
C^{-1}\E \min\{\theta t^{-1/p}, 1\}\leq \P(\theta h(\tau)^{1/2}>t^{1/p}) \leq C\E \min\{\theta t^{-1/p}, 1\},  \ \ t>0.
\end{equation}
It is easy to check that
\[\int_0^\infty \E \min\{\theta t^{-1/p}, 1\} \, dt = \frac{1}{1-p} \|\theta\|_{L^p(\O)}^p.\]
Therefore, integrating \eqref{eq:thetat2} shows that $C^{-1}\|\theta\|_{L^p(\O)} \leq \|\theta h(\tau)^{1/2}\|_{L^p(\O)}\leq C\|\theta\|_{L^p(\O)}$ for a different constant $C$. Now one can proceed as in the proof of Theorem \ref{thm:Dudley} with $L^p(\O)$ instead of $L^0(\O)$ with translation invariant metric $\|\cdot\|_{L^p(\O)}^p$ and using Proposition \ref{prop:twosided} instead of Proposition \ref{prop:L0cont} for $E=\R$ and $p\in (0,1)$.
\end{remark}

The next result is new even in the scalar setting. It can be seen as a universal version of Dudley's representation theorem. Namely, we construct a locally stochastically integrable process and consider its indefinite stochastic integral as a continuous curve in $L^0$. Then the set of accumulation points of this curve in $L^0$ coincides with $L^0$.
\begin{theorem}\label{thm:Dudleyuniversal}
Let $E$ be a separable Banach space. Assume $\F_{\infty}$ is $\P$-countably generated. Then there exists a strongly predictable process $\phi:\R_+\times\O\to E$ which is locally stochastically integrable and such that the following holds
\begin{itemize}
\item for every $\xi\in L^0(\O,\F_{\infty};E)$ there exists an increasing sequence $(n_k)_{k\geq 0}$ of natural numbers such that
\[\lim_{k\to \infty} \zeta(n_k) = \xi \ \ \ \ \text{in $L^0(\O;E)$},\]
where $\zeta(t) = \int_0^t \phi \, d W$.
\end{itemize}
\end{theorem}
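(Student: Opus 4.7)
The plan is to iterate Theorem \ref{thm:Dudley} on the consecutive intervals $[k-1,k]$, steering the continuous local martingale $\zeta$ through a countable list of prescribed values $\alpha_k$ which collectively visit every element of $L^0(\O,\F_\infty;E)$ up to arbitrarily small $L^0$-error infinitely often.

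First I would set up the targets. Separability of $E$ together with the $\P$-countable generation of $\F_\infty$ makes $L^0(\O,\F_\infty;E)$ a separable metric space; fix a countable dense set $\{\xi_n\}_{n\geq 1}$. Truncation $\xi\mapsto \xi\one_{\{\|\xi\|\leq N\}}$ combined with vector-valued martingale convergence (valid in $L^0$ for bounded, strongly measurable random variables with values in a separable Banach space) shows that $\bigcup_{t\geq 0}L^0(\O,\F_t;E)$ is $L^0$-dense in $L^0(\O,\F_\infty;E)$, so for each $(n,j)\in\N^2$ I can pick a finite horizon $t_{n,j}$ and an $\F_{t_{n,j}}$-measurable $\beta_{n,j}$ with $\|\beta_{n,j}-\xi_n\|_{L^0(\O;E)}<1/j$. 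Then I would enumerate $\N^2$ as $\{(n_k,j_k):k\geq 1\}$ so that (a) every pair occurs infinitely often and (b) $t_{n_k,j_k}<k$ for every $k$; both requirements are immediate because each $t_{n,j}$ is a fixed finite number, so $(n,j)$ is simply forbidden at positions $k\leq t_{n,j}$. Put $\alpha_k:=\beta_{n_k,j_k}$ and $\alpha_0:=0$.

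Next I would build $\phi$ by induction on $k$. Assuming $\phi$ has been constructed on $[0,k-1]$ with $\zeta(k-1)=\alpha_{k-1}$, both $\alpha_k$ and $\alpha_{k-1}$ are $\F_s$-measurable for some $s<k$ by (b), so the increment $\alpha_k-\alpha_{k-1}$ is $\sigma(\F_t:t<k)$-measurable and Theorem \ref{thm:Dudley} applied on $[a,b]=[k-1,k]$ yields a stochastically integrable process $\phi^{(k)}$ (made strongly predictable by a routine left-closed choice of the indicators $\one_{(a_n,\tau_n]}$ in the Dudley construction) with $\int_{k-1}^k \phi^{(k)}\,dW=\alpha_k-\alpha_{k-1}$. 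Pasting $\phi:=\sum_{k\geq 1}\phi^{(k)}\one_{[k-1,k)}$ gives a strongly predictable $\phi:\R_+\times\O\to E$ whose restriction to any $[0,m]$ is a finite sum of stochastically integrable processes and therefore stochastically integrable; hence $\phi$ is locally stochastically integrable and telescoping yields $\zeta(k)=\alpha_k$ for every $k\in\N$.

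For the verification, given $\xi\in L^0(\O,\F_\infty;E)$ and $\ell\geq 1$, pick $n$ with $\|\xi_n-\xi\|_{L^0(\O;E)}<1/(2\ell)$ and $j>2\ell$; by (a) there are arbitrarily large $k$ with $(n_k,j_k)=(n,j)$, and any such $k$ satisfies
\[\|\zeta(k)-\xi\|_{L^0(\O;E)}\leq \|\beta_{n,j}-\xi_n\|_{L^0(\O;E)}+\|\xi_n-\xi\|_{L^0(\O;E)}<1/j+1/(2\ell)<1/\ell.\]
Choosing such a $k$ strictly larger than the one picked at the previous step $\ell$ produces the required increasing sequence $(n_\ell)$ with $\zeta(n_\ell)\to \xi$ in $L^0(\O;E)$. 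The principal obstacle, and what the bookkeeping above is designed to overcome, is the tension between the measurability constraint in Theorem \ref{thm:Dudley} (which forces the terminal value of the integral on $[k-1,k]$ to be $\sigma(\F_t:t<k)$-measurable) and the density requirement (which calls for arbitrary $\F_\infty$-measurable limits); the interleaved enumeration, which only places the approximation $\beta_{n,j}$ at positions $k>t_{n,j}$, is exactly what reconciles the two.
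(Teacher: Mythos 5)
Your proof is correct and follows essentially the same route as the paper: both decompose $\R_+$ into unit intervals, apply Theorem \ref{thm:Dudley} on each interval to realize the increments of a sequence $(\alpha_k)$ of finitely-measurable approximations that interleaves a dense sequence of $L^0(\O,\F_{\infty};E)$ infinitely often, and conclude by telescoping plus a diagonal selection. The only (immaterial) difference is in how the approximations are manufactured: the paper evaluates at integer times a predictable modification of $\E\big(\xi_n/(1+\|\xi_n\|)\,\big|\,\F_{t-}\big)$ (suitably un-normalized), whereas you use truncation plus conditional expectation with explicit finite horizons $t_{n,j}$ and the bookkeeping constraint $t_{n_k,j_k}<k$ to meet the measurability hypothesis of Theorem \ref{thm:Dudley}.
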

Clearly, $\phi$ is only {\em locally} stochastically integrable, because $\lim_{t\to \infty} \zeta(t)$ does not exist.

\begin{proof}
{\em  Step 1:} Let $\xi\in L^0(\O,\F_{\infty};E)$. We claim that there exist an $\F$-predictable process $\eta$ such that $\lim_{t\to \infty} \eta(t) = \xi$ almost surely.
By Proposition \ref{prop:Doobreg}, there exists a left-continuous $\mathscr F$-adapted process $\theta^\prime:\R_+\times\O\to E$ satisfying $\theta^\prime(t) = \E\big(\frac{\xi}{1+\|\xi\|}\big|\F_{t-}\big)$ a.s. for every $t>0$ and $\lim_{t\to \infty} \theta'(t)  = \frac{\xi}{1+\|\xi\|}$ almost surely.
Define $\eta:\R_+\times\O\to E$ by
\begin{equation}\label{eq:eta}
\eta(t) = \one_{\|\theta'(t)\|<1} \frac{\theta'(t)}{1-\|\theta'(t)\|}.
\end{equation}
Then $\eta$ is also predictable, and by the previous observation $\lim_{t\to \infty} \eta(t) = \xi$ almost surely.

{\em Step 2:} Since $\F_{\infty}$ is $\P$-countably generated and $E$ is separable, the space $L^0(\O,\F_{\infty};E)$ is separable as well. Let $(\xi_n)_{n\geq 0}$ be a dense sequence in $L^0(\O,\F_{\infty};E)$. For each $n\geq 0$, step 1 implies that there exists a predictable $\eta_n$ such that $\lim_{t\to \infty} \eta_n(t) = \xi_n$ almost surely.

Let $\alpha:\N\to \N$ be such that for each $j\geq 0$,
\begin{equation}\label{eq:alphafunctie}
\#\{k\in \N:\alpha(k) = j\}=\infty.
\end{equation} Let $\rho_n = \eta_{\alpha(n)}(n)$ for $n\geq 0$. Since $\eta$ is predictable, for each $n\geq 0$, $\rho_n$ is strongly $\F^{n}$-measurable. Moreover, the set of accumulation points of $(\rho_n)_{n\geq 0}$ is equal to $L^0(\O,\F_{\infty};E)$. Indeed, it suffices to show that each $\xi_j$ is an accumulation point of $(\rho_n)_{n\geq 0}$. This is obvious from \eqref{eq:alphafunctie} and step 1.

{\em Step 3:} By Theorem \ref{thm:Dudley}, for each $n\geq 2$ we can find a stochastic integrable process $\phi_n:[n, n+1]\to E$ such that $\int_n^{n+1} \phi_n \, d W =  \rho_{n-1} - \rho_{n-2}$. Define
$\phi:\R_+\to E$ by
\[\phi = \sum_{n\geq 2} \one_{[n, n+1)} \phi_n.\]
Then $\phi$ is predictable and locally stochastically integrable. Let $\zeta$ be its stochastic integral process. For each integer $k\geq 2$ one has
\begin{align*}
\zeta(k) & = \sum_{n=2}^k \int_{n}^{n+1} \phi_n \, d W = \sum_{n=2}^k  (\rho_n - \rho_{n-1})
 = \sum_{n=1}^k  (\rho_n - \rho_{n-1})  = \rho_k.
\end{align*}
Now the theorem follows from Step 2.
\end{proof}

\begin{remark}
Using the result of Remark \ref{rem:Dudley} one can modify the proof of Theorem \ref{thm:Dudleyuniversal} to obtain a universal representation theorem for $L^p(\O;E)$ with $p\in (0,1)$ in the sense that for every $\xi\in L^p(\O;E)$ one can find a sequence of natural numbers $(n_k)_{k\geq 1}$ such that $\lim_{k\to \infty} \zeta(n_k) = \xi$ in $L^p(\O;E)$. To proof this fact one can follow the arguments of the above proof with $\theta(t) = \E\big(\frac{\xi}{\|\xi\|^{1-p}}\big|\F_t\big)$ (convention $\frac{0}{0}=0$) and \eqref{eq:eta} replaced by $\eta(t) = \theta'(t) \|\theta'(t)\|^{\frac{1-p}{p}}$.
\end{remark}

\section{Weak characterizations of stochastic integrability\label{sec:weak}}
In this section we obtain two new weak characterizations of stochastic integrability for processes $\Phi:[0,T]\times\O\to \calL(H,E)$, where $E$ is a \umd Banach space and $H$ is a separable Hilbert space. For details on stochastic integration theory of vector-valued processes we refer to Appendix \ref{sec:stochint}.

The following strengthening of \cite[Theorem 5.9]{NVW1} holds, where the case with $\zeta\in L^0(\O;C_b([0,T];E))$ was considered.
\begin{theorem}\label{thm:bddpaths}
Let $E$ be a \umd Banach space. Let $\Phi:[0,T]\times\O\to \calL(H,E)$ be an $H$-strongly measurable and adapted process which is weakly in $L^0(\O;L^2(0,T;H))$. Let $\zeta:[0,T]\times\O \to E$ be a process with almost surely bounded paths. If for all $x^*\in E^*$ almost surely, one has
\[\int_0^t \Phi^* x^* \, d W_H = \lb \zeta(t), x^*\rb,  \ \ \ t\in [0,T],\]
then $\Phi$ represents an element in $L^0_{\F}(\O;\g(L^2(0,T;H),E))$, and almost surely one has
\begin{equation}\label{eq:zetastrong}
\int_0^t \Phi \, d W_H = \zeta(t),  \ \ \ t\in [0,T].
\end{equation}
Moreover, $\zeta$ is a local martingale with a.s.\ continuous paths.
\end{theorem}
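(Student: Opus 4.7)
The idea is to produce a strongly continuous modification $\tilde\zeta$ of $\zeta$ and then to reduce the theorem to the earlier version \cite[Theorem 5.9]{NVW1}. As a preliminary reduction, because $\Phi$ is $H$-strongly measurable and $H$ separable, $\Phi$ takes values off a null set in a closed separable subspace $E_0\subseteq E$; any $x^*\in E^*$ vanishing on $E_0$ satisfies $\Phi^*x^*=0$, so the weak identity forces $\lb\zeta,x^*\rb=0$ a.s.\ and $\zeta$ is essentially $E_0$-valued. Since \umd spaces are reflexive, without loss of generality I may assume $E$ itself is separable with separable dual.

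Next, I would aggregate the scalar continuous modifications. Pick a countable $\Q$-linear norm-dense subset $D\subseteq E^*$. For each $x^*\in D$ the scalar stochastic integral $M^{x^*}(t):=\int_0^t\Phi^*x^*\,dW_H$ is a continuous local martingale which, by hypothesis, coincides with $\lb\zeta(t),x^*\rb$ for all $t\in[0,T]$ off a null set $N_{x^*}$. Off the countable union of the $N_{x^*}$ together with the null set on which $\|\zeta\|$ is unbounded on $[0,T]$, the map $x^*\mapsto M^{x^*}(t,\omega)$ is $\Q$-linear on $D$ and bounded by $\|x^*\|\cdot\sup_s\|\zeta(s,\omega)\|$, so it extends uniquely to an element $\tilde\zeta(t,\omega)\in E^{**}=E$ by reflexivity. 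This yields an $E$-valued process satisfying $\tilde\zeta(t)=\zeta(t)$ a.s.\ for each fixed $t$ and with a.s.\ weakly continuous, norm-bounded paths.

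The main obstacle is upgrading weak to strong continuity of $\tilde\zeta$. For this I introduce the stopping times
\[
\sigma_n:=\inf\Big\{t\in[0,T]:\sup_{x^*\in D,\ \|x^*\|\leq 1}|M^{x^*}(t)|\geq n\Big\}\wedge T,
\]
which are stopping times because the supremum is lower semicontinuous in $t$ over the countable family of continuous adapted processes $M^{x^*}$, and which satisfy $\sigma_n\uparrow T$ a.s.\ thanks to the bounded paths assumption. On $[0,\sigma_n]$ the process $\tilde\zeta$ is a uniformly norm-bounded, weakly continuous $E$-valued weak martingale, and the path-regularity results for martingales in \umd spaces recorded in the appendix (cf.\ Proposition \ref{prop:Doobreg}) produce a c\`adl\`ag modification; weak continuity then forces the jumps to vanish, so the modification is strongly continuous. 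Patching over $n$ furnishes a version of $\tilde\zeta$ lying in $L^0(\O;C_b([0,T];E))$.

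With this continuous modification in hand, \cite[Theorem 5.9]{NVW1} applies and yields that $\Phi\in L^0_\F(\O;\g(L^2(0,T;H),E))$ with $\int_0^t\Phi\,dW_H=\tilde\zeta(t)$ a.s.\ for every $t\in[0,T]$; since $\tilde\zeta(t)=\zeta(t)$ a.s.\ pointwise in $t$, this gives \eqref{eq:zetastrong}, and the local martingale property together with the a.s.\ continuous paths of $\zeta$ are inherited from the stochastic integral.
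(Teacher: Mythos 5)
Your route is genuinely different from the paper's. The paper never regularizes $\zeta$ first: it passes to the finite-dimensional quotients $E_n=E/\bigcap_{i\le n}\ker(x_i^*)$, integrates $Q_n\Phi$ there, and uses the two-sided tail estimate of Proposition \ref{prop:L0cont} with $p=2$ to bound $\P\bigl(\|R_n\|_{\g(L^2(0,T;H),E_n)}>\varepsilon\bigr)$ by $c_2^2\beta_{2,E}^2\delta^2\varepsilon^{-2}+\P(\sup_t\|\zeta(t)\|\ge\delta)$; monotonicity of the $\g$-norms in $n$ plus Fatou then gives $\sup_n\|R_n\|_{\g}<\infty$ a.s., which is exactly membership of $R_\Phi$ in $L^0_{\F}(\O;\g(L^2(0,T;H),E))$. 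Continuity of $\zeta$ comes out only at the end, as a property of the stochastic integral. You instead try to manufacture a continuous modification up front so as to quote \cite[Theorem 5.9]{NVW1}. That is a legitimate strategy in principle, but the continuity upgrade is precisely the content of the strengthening being proved, and as written it contains the real gaps.

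Concretely: (i) Proposition \ref{prop:Doobreg} does not ``produce a c\`adl\`ag modification''; it produces a \emph{caglad} modification of an $(\F_{t-})$-martingale (in any Banach space -- \umd{} plays no role there). Your stopped process is at best an $(\F_t)$-martingale, so you must first check that the bounded, weakly-martingale, strongly measurable process $\tilde\zeta^{\sigma_n}$ is a genuine Bochner martingale, then compare $\E(\tilde\zeta^{\sigma_n}(T)|\F_{t-})$ with $\E(\tilde\zeta^{\sigma_n}(T)|\F_t)$ (this works because the scalar integrals are continuous, hence $\F_{t-}$-measurable at time $t$ -- the same manoeuvre the paper performs in the proof of Corollary \ref{cor:L1}), and finally identify the caglad version with $\tilde\zeta$ at \emph{all} times via a countable dense set of times before ``weak continuity kills the jumps'' makes sense. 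None of this is in your write-up. (ii) $\sigma_n$ need not be an $\F$-stopping time: $\F$ is only assumed complete, not right-continuous, and $t\mapsto\sup_{k}|M^{x_k^*}(t)|$ is merely lower semicontinuous, so the debut is in general only an $\F_{t+}$-stopping time. The paper is explicitly careful about this very point in the proof of Theorem \ref{thm:Doob} (``Note that $\tau_n$ is possibly not a stopping time''), whereas you rely on optional stopping at $\sigma_n$. This is repairable (pass to $\F_{t+}$, for which $W_H$ is still a cylindrical Brownian motion, and recover $R_\Phi\in L^0_\F$ at the end from the $\F$-adaptedness of $\Phi$ via \cite[Propositions 2.11 and 2.12]{NVW1}), but it must be addressed. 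With (i) and (ii) filled in, your argument should close; as it stands, the decisive step is asserted rather than proved.
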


Before we turn to the proof, we mention the following consequence of the above result. It extends a part of \cite[Theorem 3.6]{NVW1} to the case $p=1$.
\begin{corollary}\label{cor:L1}
Let $E$ be a \umd Banach space. Let $\Phi:[0,T]\times\O\to \calL(H,E)$ be an $H$-strongly measurable and adapted process which is weakly in $L^1(\O;L^2(0,T;H))$. Let $\xi\in L^1(\O;E)$. If for all $x^*\in E^*$ one has almost surely
\[\int_0^T \Phi^* x^* \, d W_H = \lb \xi, x^*\rb,\]
then $\Phi$ represents an element in $L^0_{\F}(\O;\g(L^2(0,T;H),E))$, and
\[\int_0^T \Phi \, d W_H = \xi.\]
Moreover, for all $q\in (0,1)$ one has $\Phi\in L^q(\O;\g(L^2(0,T;H),E))$.
\end{corollary}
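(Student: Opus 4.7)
The plan is to apply Theorem \ref{thm:bddpaths} with $\zeta$ taken to be a c\`adl\`ag modification of the Bochner martingale $t\mapsto \E(\xi\mid\F_t)$, and then to deduce the $L^q$-bound from Doob's weak-type maximal inequality combined with the two-sided Burkholder--Davis--Gundy inequality available in \umd spaces. Since $\xi\in L^1(\O;E)$ and the \umd space $E$ is reflexive (hence has the Radon--Nikod\'ym property), the conditional-expectation martingale $\zeta(t):=\E(\xi\mid\F_t)$ admits a c\`adl\`ag modification $\bar\zeta$ by the Doob regularization result of the appendix (Proposition \ref{prop:Doobreg}). Any c\`adl\`ag path on the compact interval $[0,T]$ is automatically bounded, so $\bar\zeta$ has almost surely bounded paths.

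Next I would verify the weak identity hypothesised in Theorem \ref{thm:bddpaths}. Fix $x^*\in E^*$. Since $\Phi^* x^*\in L^1(\O;L^2(0,T;H))$, the scalar stochastic integral $t\mapsto\int_0^t \Phi^* x^*\, dW_H$ is a uniformly integrable continuous martingale (by the scalar Burkholder--Davis--Gundy inequality, \cite[Proposition 17.7]{Kal}), whose terminal value equals $\lb\xi,x^*\rb$ by hypothesis. The tower property then gives
\begin{equation*}
\int_0^t \Phi^* x^*\, dW_H \;=\; \E(\lb\xi,x^*\rb\mid\F_t) \;=\; \lb\bar\zeta(t),x^*\rb \qquad \text{a.s.}
\end{equation*}
for each fixed $t\in[0,T]$, and since both sides are c\`adl\`ag in $t$ (the left actually continuous), the identity extends a.s.\ to all $t\in[0,T]$ simultaneously. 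Theorem \ref{thm:bddpaths} then delivers the stochastic integrability of $\Phi$ in $L^0_\F(\O;\g(L^2(0,T;H),E))$ together with $\int_0^T \Phi\, dW_H = \bar\zeta(T)=\xi$.

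For the $L^q$-bound with $q\in(0,1)$, set $M(t):=\int_0^t \Phi\, dW_H$, which coincides a.s.\ with $\bar\zeta(t)$. Then $(\|M(t)\|)_{t\in[0,T]}$ is a nonnegative submartingale with $\E\|M(T)\|=\E\|\xi\|$, so Doob's weak-type inequality gives $\P(\sup_{t\leq T}\|M(t)\|>\lambda)\leq \lambda^{-1}\E\|\xi\|$, and the layer-cake formula upgrades this to $\E(\sup_{t\leq T}\|M(t)\|)^q\leq (1-q)^{-1}(\E\|\xi\|)^q$. The two-sided Burkholder--Davis--Gundy inequality in \umd spaces (Proposition \ref{prop:twosided}) then bounds $\E\|\Phi\|_{\g(L^2(0,T;H),E)}^q$ by a constant multiple of this quantity, completing the argument. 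The main point is organisational rather than technical: Theorem \ref{thm:bddpaths} does essentially all the work, and this corollary's task is merely to produce a bounded-path representative of the weak stochastic integral, which the c\`adl\`ag Doob modification supplies thanks to the reflexivity (and hence RNP) of the \umd space $E$.
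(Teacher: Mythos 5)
Your argument is essentially identical to the paper's: regularize the conditional-expectation martingale of $\xi$ via Proposition \ref{prop:Doobreg} to get a path-bounded version satisfying the weak identity for all $t$ simultaneously, feed it into Theorem \ref{thm:bddpaths}, and obtain the $L^q$-bound from the weak-type $(1,1)$ maximal inequality, the layer-cake computation, and Proposition \ref{prop:twosided}. The only (harmless) discrepancies are that Proposition \ref{prop:Doobreg} as stated yields a \emph{left}-continuous modification of the $(\F_{t-})$-martingale $t\mapsto\E(\xi\mid\F_{t-})$ rather than a c\`adl\`ag modification of $t\mapsto\E(\xi\mid\F_t)$ --- the paper works with the caglad version, which serves the same purpose --- and the appeal to reflexivity/RNP is unnecessary since the martingale is already given as a conditional expectation of an $L^1$ random variable.
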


\begin{remark}\
\begin{enumerate}
\item Note that Corollary \ref{cor:L1} is false if one only assumes that $\Phi$ is weakly in $L^p(\O;L^2(0,T;H))$ for all $p\in [0,1)$ and $\xi\in L^q(\O;E)$ for some $q\in [0,\infty]$ (see Theorem \ref{thm:main}).
\item We also do not know whether there is a positive result like Corollary \ref{cor:L1} if $\Phi$ is weakly in $L^1(\O;L^2(0,T))$ and $\xi\in L^0(\O;E)$.
\item In general, $\Phi \notin L^1(\O;\g(L^2(0,T;H),E))$ in Corollary \ref{cor:L1}. Indeed, even in scalar case this fails, because Doob's maximal $L^p$-inequality does not hold for $p=1$.
\end{enumerate}
\end{remark}

\begin{proof}[Proof of Theorem \ref{thm:bddpaths}]
The proof is an extension to $L^0$ of the ideas in \cite[Theorem 3.6]{NVW1} for the $L^p$-case.
As in \cite[Theorem 3.6]{NVW1} we can assume $E$ and $E^*$ are separable, and we can choose a dense sequence of functionals $(x_n^*)_{n\geq 1}$ in $B_X$. For each $n\geq 1$ let $F_n = \bigcap_{i=1}^n \text{ker}(x_i^*)$, $E_n = E/F_n$ and let $Q_n:E\to E_n$ be the quotient map. Note that each $E_n$ is finite dimensional. For each $n\geq 1$, let $\Phi_n:[0,T]\times\O\to \calL(H,E_n)$ be defined by $\Phi_n = Q_n \Phi$. Fix $n\geq 1$. Clearly $\Phi_n$ is weakly in $L^0(\O;L^2(0,T;H))$, $\Phi_n$ is stochastically integrable and one can check that
\[\int_0^\cdot \Phi_n \, d W_H = \zeta_n\qquad \textrm{almost surely},\]
where $\zeta_n:[0,T]\times\O\to E_n$ be defined by $\zeta_n = Q_n \zeta$. Since $E_n$ is finite dimensional $\Phi_n$ represents an element $R_{n}\in L^0_{\F}(\O;\g(L^2(0,T;H), E))$.
By Proposition \ref{prop:L0cont} for each $\varepsilon>0$ and $\delta>0$ one has
\begin{equation}\label{eq:tailest}
\begin{aligned}
\P(\xi_n >\varepsilon) & \leq \frac{c_2^2 \beta_{2,E}^2 \delta^2}{\varepsilon^2} + \P\Bigl(\sup_{t\in[0,T]}\|\zeta_n(t)\|\geq
\delta\Bigr) \leq \frac{c_2^2 \beta_{2,E}^2 \delta^2}{\varepsilon^2} + \P(\eta\geq
\delta),
\end{aligned}
\end{equation}
where we used $\beta_{p,E_n} \leq \beta_{p,E}$, $\|Q_n\|\leq 1$ and
\[\xi_n := \|R_n(\omega)\|_{\g(L^2(0,T;H),E_n)}, \ \ \ \ \eta=\sup_{t\in[0,T]}\|\zeta(t)\|.\]
We claim that for almost all $\omega\in \O$, $C(\omega) := \sup_{n\geq 1} \xi_n<\infty$.
To prove the claim let $\theta>0$. We will show that there exists an $\varepsilon>0$ such that $\P(C>\varepsilon)<\theta$.
As in \cite[Theorem 3.6]{NVW1} for all $1\leq m\leq n$ one has $\xi_m = \|R_m\|_{\g(L^2(0,T;H),E_m)}\leq \|R_n\|_{\g(L^2(0,T;H),E_n)} = \xi_n$ pointwise in $\O$.
Since $\zeta$ has bounded paths a.s., we can find $\delta>0$ so large that $\P(\eta\geq \delta)<\theta/2$. Choose
$\varepsilon>0$ be so large that $\frac{c_2^2\beta_{2,E}^2 \delta^2}{\varepsilon^2}<\theta/2$. Then it follows from \eqref{eq:tailest} and Fatou's lemma that \begin{align*}
\P(\sup_{n\geq 1}\xi_n >\varepsilon) & = \P(\lim_{n\to \infty}\xi_n >\varepsilon) \leq \liminf_{n\to \infty} \P(\xi_n >\varepsilon) \leq \theta
\end{align*}
and the claim follows. Now the proof of the fact that $\Phi$ represents an $R_{\Phi}\in L^0_{\F}(\O;\g(L^2(0,T;H),E))$ can be finished as in \cite[Theorem 3.6, step 3-5]{NVW1}. It follows from Proposition \ref{prop:L0cont} that $\Phi$ is stochastically integrable. The identity \eqref{eq:zetastrong} can be deduced by using the weak identity for $(x_n^*)_{n\geq 1}$.
\end{proof}

\begin{proof}[Proof of Corollary \ref{cor:L1}]
Let $\widetilde\zeta:[0,T]\times\O\to E$ be an $\mathscr F$-adapted process with left-continuous paths admitting right limits satisfying $\widetilde\zeta(t) = \E(\xi|\F_{t-})$ a.s. for every $t\ge 0$ (see Proposition \ref{prop:Doobreg}). In particular, $\widetilde{\zeta}$ has bounded paths almost surely. Furthermore, from the path continuity and martingale property of the stochastic integral one deduces that for all $x^*\in E^*$ and all $t\in [0,T]$, almost surely
\[\lb \widetilde{\zeta}(t), x^*\rb = \int_0^t \Phi^* x^* \, d W_H.\]
Note that the right-hand side being a stochastic integral process has a continuous version. Therefore,
by the left-continuity of the left-hand side we obtain that for all $x^*\in E^*$, almost surely
\[\lb \widetilde{\zeta}(t), x^*\rb  =  \int_0^t \Phi^* x^* \, d W_H, \ \ \ t\in [0,T].\]
Now the assertion that $\Phi\in L^0(\O;\g(L^2(0,T;H), E))$ follows from Theorem \ref{thm:bddpaths}. Moreover, the strong identity
\eqref{eq:zetastrong} implies
\[\int_0^t \Phi \, d W_H = \widetilde{\zeta}(t),  \ \ \ t\in [0,T].\]
and it follows that $\widetilde{\zeta}$ is an $\F$-martingale which is a.s.\ continuous.

By \cite[Proposition 7.15]{Kal} one has
\begin{align*}
\lambda \P(\sup_{t\in [0,T]} \|\widetilde{\zeta}(t)\|>\lambda) \leq \|\xi\|_{L^1(\O;E)},  \ \ \lambda>0.
\end{align*}
Hence for all $s>0$ one has
\begin{align*}
\E  \sup_{t\in [0,T]} \|\widetilde{\zeta}(t)\|^q &= \int_0^\infty \P(\sup_{t\in [0,T]} \|\widetilde{\zeta}(t)\|^q>\lambda) \, d\lambda
\\ & \leq s + \|\xi\|_{L^1(\O;E)} \int_s^\infty \lambda^{-1/q} \, d\lambda = s + \frac{q\|\xi\|_{L^1(\O;E)} }{1-q} s^{(q-1)/q}.
\end{align*}
Minimizing the latter over $s>0$ one obtains $s=\|\xi\|_{L^1(\O;E)}^{q}$, and therefore
\[\E  \sup_{t\in [0,T]} \|\widetilde{\zeta}(t)\|^q \leq (1-q)^{-1} \|\xi\|_{L^1(\O;E)}^{q}.\]
Now the last assertion follows from Proposition \ref{prop:twosided}.
\end{proof}

\section{Doob's representation theorem \label{sec:represenDoob}}
The following version of Doob's representation theorem holds for \umd Banach spaces:
\begin{theorem}\label{thm:Doob}
Let $H$ be a real separable Hilbert space and let $E$ be a \umd Banach space. Let $M:\R_+\times\O\to E$ be an a.s.\ bounded process such that for all $x^*\in E^*$, $\lb M, x^*\rb$ is a local martingale with a.s.\ continuous paths. If there exists a weakly $L^0(\O;L^2(\R_+;H))$-process $g:\R_+\times \O\to \calL(H,E)$ which is $H$-strongly measurable and adapted and such that for all $x^*\in E^*$, a.s. one has
\[[\lb M, x^*\rb]_t   =  \int_0^t \| g(s)^* x^*\|^2_H \, d s, \ \ \ \ t\in \R_+,\]
then $g\in L^0(\O;\g(L^2(\R_+;H), E))$, $M$ is a local martingale with a.s.\ continuous paths and there exists a cylindrical Brownian motion $W_H$ on an extended probability space such that a.s.
\begin{equation}\label{eq:represE}
M_t = \int_0^t g(s) \, d W_H(s), \ \ \ \ t\in \R_+.
\end{equation}
\end{theorem}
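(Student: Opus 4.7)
The plan is to first build an $H$-cylindrical Brownian motion $W_H$ on an extended probability space satisfying the weak identity $\lb M(t), x^*\rb = \int_0^t g(s)^* x^* \, dW_H(s)$ a.s.\ for every $x^* \in E^*$, and then to invoke Theorem \ref{thm:bddpaths} on each interval $[0,T]$ to promote this to the strong identity, simultaneously obtaining $g \in L^0_\F(\O;\g(L^2(\R_+;H), E))$ and the continuity of $M$ as a local martingale.

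I would begin by polarising the quadratic variation hypothesis to obtain the cross-covariations $\lb\lb M, x^*\rb, \lb M, y^*\rb\rb_t = \int_0^t (g(s)^* x^*, g(s)^* y^*)_H \, ds$ for all $x^*, y^* \in E^*$. On an enlarged probability space, I would introduce an $H$-cylindrical Brownian motion $\widetilde{W}_H$ independent of $\F_\infty$. For each $(s,\omega)$, set $V(s,\omega):=\overline{\mathrm{Ran}\,g(s,\omega)^*} \subset H$ with predictable orthogonal projection $P(s)$ produced by a measurable selection argument. The construction of $W_H$ would then proceed by a vector-valued Doob-type recipe: on $V(s)$ the idea is to define $W_H$ by inverting $g(s)^*$ against $M$ (approximating $P(s)h$ by finite combinations $\sum_i c_i^{(n)} g(s)^* x_i^{*,(n)}$ and forming the corresponding sums $\sum_i c_i^{(n)} \lb M, x_i^{*,(n)}\rb$, whose convergence in the appropriate $L^2$-sense is controlled by the covariation formula), while on $V(s)^\perp$ one would integrate against $\widetilde W_H$. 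A direct computation of brackets should then confirm via L\'evy's characterization that $h \mapsto W_H(h)(\cdot)$ is an $H$-cylindrical Brownian motion, with the cross term between the two pieces vanishing both by independence of $\widetilde W_H$ and by $P(s)(I-P(s))=0$.

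Once $W_H$ is in hand, the weak identity $\int_0^t g(s)^* x^* \, dW_H(s) = \lb M(t), x^*\rb$ becomes automatic (only the $M$-piece of $W_H$ contributes since $g(s)^* x^* \in V(s)$ a.e.), and $g|_{[0,T]}$ is weakly in $L^0(\O;L^2(0,T;H))$ by the quadratic variation hypothesis applied to each $x^*$. Since $M$ has a.s.\ bounded paths, Theorem \ref{thm:bddpaths} applied on each $[0,T]$ yields the strong identity $M(t) = \int_0^t g \, dW_H$ on $[0,T]$ together with $g|_{[0,T]} \in L^0_\F(\O;\g(L^2(0,T;H),E))$ and the continuity of $M$ on $[0,T]$. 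Letting $T\to\infty$ and using consistency of stochastic integrals gives the representation on all of $\R_+$ and $g \in L^0(\O;\g(L^2(\R_+;H),E))$.

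The hard part is the construction of $W_H$: it is a generalised Doob/Dambis--Dubins--Schwarz step in which the noise coefficient $g(s)^*$ must be inverted on its time- and $\omega$-dependent closed range in a measurable, predictable manner, with the orthogonal complement filled in by an independent cylindrical Brownian motion so as to satisfy L\'evy's identity $[W_H(h), W_H(k)]_t = t(h,k)_H$. In the scalar case this is transparent, but for Banach-valued $M$ the ``inverse'' has no literal meaning and must instead be built indirectly, via approximation by functionals $x^* \in E^*$, which are the only access one has to $M$ at the weak level. The UMD hypothesis, notably, is not used for this step; it enters only at the end, through Theorem \ref{thm:bddpaths}, to upgrade the weak identity to a strong one and to deliver the $\g$-radonifying regularity of $g$.
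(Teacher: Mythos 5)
Your two-stage architecture---first produce a cylindrical Brownian motion $W_H$ realizing the weak identity $\lb M_t,x^*\rb=\int_0^t g(s)^*x^*\,dW_H(s)$, then invoke Theorem \ref{thm:bddpaths} to upgrade this to the strong identity and obtain $g\in L^0(\O;\g(L^2(\R_+;H),E))$ together with the continuity of $M$---is exactly the paper's, and your second stage matches the paper's essentially verbatim. The difference lies in the first stage: the paper does not construct $W_H$ by hand but cites \cite[Theorem 3.1]{Ondr07}, which represents cylindrical local martingales in an arbitrary separable Banach space; your Dambis--Dubins--Schwarz-type recipe (predictable projection onto $\overline{\mathrm{Ran}\, g(s)^*}$, inversion of $g(s)^*$ against $M$ there, independent noise on the orthogonal complement, L\'evy's characterization) amounts to a sketch of a re-proof of that cited result.

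The gap is in that sketch. The only substantive work the paper does in this stage is to verify the hypothesis of the cited theorem: it builds a \emph{common} localizing sequence $\tau_n=\inf_m\tau_n^m$ satisfying $\tau_n\ge\mu_n=\inf\{t\ge 0:\|M_t\|\ge n\}$, and proves the uniform bound $\E\int_0^{\tau_n}\lb g(s),x^*\rb^2\,ds\le n^2\|x^*\|^2$ for all $x^*\in E^*$. This is precisely what your proposal omits: the processes $\lb M,x^*\rb$ are only \emph{local} martingales, each with its own a priori unrelated localizing sequence, so your claim that the convergence of the approximating sums $\sum_i c_i^{(n)}\lb M,x_i^{*,(n)}\rb$ is ``controlled by the covariation formula in the appropriate $L^2$-sense'' has no content until you exhibit stopping times that simultaneously reduce all of these local martingales with a bound uniform over the unit ball of $E^*$. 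That common localization is exactly where the a.s.\ boundedness of $M$ enters (it forces $\tau_n\ge\mu_n\to\infty$), so your closing remark that the boundedness is used only at the Theorem \ref{thm:bddpaths} stage is inaccurate: it is also the ingredient that makes the construction of $W_H$ possible. If you supply this localization (plus a genuine predictable-selection argument for the projections $P(s)$), your construction should go through---but at that point you will have reproved \cite[Theorem 3.1]{Ondr07}, which the paper simply invokes.
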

\begin{proof}
Without loss of generality we can assume $E$ is separable. Since $E$ is reflexive, $E^*$ is separable as well and we can choose a dense sequence $(x_m^*)_{m\geq 1}$ in the unit sphere $S_{E^*}$.

Define a sequence $(\mu_n)_{n\geq 1}$ of $[0,\infty]$-valued random variables by
\[\mu_n = \inf\{t\geq 0: \|M_t\|\geq n\}, \ \ \ n\geq 1.\]
For each $m\geq 1$ let the stopping times $(\tau_n^m)_{n\geq 1}$ be defined as
\[\tau_n^{m} = \inf\{t\geq 0: |\lb M_t, x^*_m\rb|\geq n\}, \ \ \ n\geq 1\]
Then $\tau_n^m\geq \mu_n$ everywhere. Let $\tau_n = \inf_{m\geq 1}\tau_n^m$. Then each $\tau_n$ is measurable and satisfies $\tau_n\geq \mu_n$. Therefore, $\lim_{n\to \infty} \tau_n = \infty$ a.s. Note that $\tau_n$ is possibly not a stopping time.

We claim that for all $n\geq 1$ and all $x^*\in E^*$ one has
\[\E \int_0^{\tau_n} \lb g(s), x^*\rb^2 \, d s \leq n^2 \|x^*\|^2.\]
By homogeneity, density and Fatou's lemma it suffices to consider $x^* = x_m^*$. Fix $m,n\geq 1$.
Then by \cite[Theorems 17.5 and 17.11]{Kal} almost surely one has
\[\E \int_0^{\tau_n} \lb g(s), x^*_m\rb^2 \, d s \leq \E \int_0^{\tau_n^{m}} \lb g(s), x^*_m\rb^2 \, d s = \E [\lb M, x^*_m\rb^{\tau_n^{m}}]_\infty = \E|\lb M, x^*_m\rb_{\tau_n^{m}}|^2 \leq n^2 \|x^*_m\|^2.\]
which proves the claim.

Now we can apply \cite[Theorem 3.1]{Ondr07} to obtain a cylindrical Brownian motion $W_H$ such that for all $x^*\in E^*$ a.s.
\[\lb M_t, x^*\rb = \int_0^t \lb g(s), x^*\rb  \, d W_H(s),  \ \ \ t\in [0,T].\]
Now it follows from Theorem \ref{thm:bddpaths} that $g\in L^0(\O;\g(L^2(\R_+;H), E))$ and moreover \eqref{eq:represE} holds. Also, $M$ is a continuous local martingale.
\end{proof}

\begin{remark}\
\begin{enumerate}
\item In the theory of stochastic integration in M-type $2$ Banach spaces, a deterministic process $g$ is stochastically integrable if $g\in L^2(\R_+;\g(H,E))$. It would however be rather restrictive to assume this in Theorem \ref{thm:Doob}, unless $E$ has cotype $2$. Indeed, if $E$ does not have cotype $2$, we can find a function $g\in \g(L^2(\R_+;H), E)$ which is not in $L^2(\R_+,\g(H,E))$ (see \cite{NeeCMA,RS}). Clearly, $(M_t)_{t\geq 0}$ defined by $M_t = \int_0^t g(s) \, d W_H(s)$ is a continuous martingale which satisfies the conditions of Theorem \ref{thm:Doob}. In this sense, when $E$ is a \umd type $2$ Banach space, Theorem \ref{thm:Doob} covers more cases provided stochastic integration in \umd Banach spaces is considered.
\item If $(\F_t)_{t\geq 1}$ is a filtration which is generated by a cylindrical Brownian motion $(W_H(t))_{t\geq 0}$, then a stronger form of a representation theorem can be obtained (see \cite[Theorems 3.5 and 5.13]{NVW1}).
\item Similar techniques as above have been used in \cite{KunzeMart} in the setting of martingale solutions to stochastic evolution equations.
\end{enumerate}
\end{remark}

\appendix

\section{Stochastic integration\label{sec:stochint}}
Let $(\O,\A,\P)$ be a probability space with a complete filtration $\F=(\F_t)_{t\geq 0}$. Let $H$ be a real separable Hilbert space and let $E$ be a real Banach space.

In this section we recall some of the definitions and results of \cite{NVW1}. The space $\g(L^2(0,T;H), E)$ will be the space of {\em $\g$-radonifying operators} from $L^2(0,T;H)$ into $E$. For details we refer to \cite{NeeCMA,NVW1}.
The notions below also apply to the case where $H = \R$ and then we identify $\calL(H,E)$ with $E$.

A process $\Phi:[0,T]\times\O\to \calL(H,E)$ is said to {\em represent} $R\in L^p(\O;\g(L^2(0,T;H), E))$ if $\Phi$ is $H$-strongly measurable, weakly $L^p(\O;L^2(0,T;H))$ and for all $f\in L^2(0,T;H)$ and $x^*\in E^*$,
\[
\lb R(\omega)f, x^*\rb = \int_0^T \lb f(t), \Phi^*(t,\omega)x^*\rb_H\,dt \ \
\hbox{for almost all $\omega\in\O$}.
\]
In the above setting, the almost sure set in $\O$ can be chosen uniformly in $x^*\in E^*$ and $f\in L^2(0,T;H)$ (see \cite[Lemma 2.7]{NVW1}.
We will identity $R$ and $\Phi$ and write $\Phi\in L^p(\O;\g(L^2(0,T;H),E))$ instead of $R\in L^p(\O;\g(L^2(0,T;H),E))$ and the same for its norm. Sometimes to emphasize that $R$ is represented by $\Phi$, we will also write $R_{\Phi}$ instead of $R$.

If $\Phi$ is a linear combination of processes of the form $\one_{(a,b]} h\otimes  \xi$ with $0\leq a<b\leq T$ (with $b<\infty$), $h\in H$ and $\xi:\O\to E$ strongly $\F_a$-measurable, then $\Phi$ will be called an adapted step function. Clearly, $R_{\Phi}\in L^0(\O;\g(L^2(0,T;H), E))$. For every $p\in [0,\infty)$, the closure of the space of elements of the form $R_{\Phi}\in L^p(\O;\g(L^2(0,T;H), E))$, where $\Phi$ is an adapted step process will be denoted by $L^p_{\F}(\O;\g(L^2(0,T;H), E))$. If $\Phi$ is $H$-strongly measurable and adapted and represents $R\in L^p(\O;\g(L^2(0,T;H), E))$, then one has $R\in L^p_{\F}(\O;\g(L^2(0,T;H), E))$ (see \cite[Propositions 2.11 and 2.12]{NVW1}).

Let $(W_H(t))_{t\in \R_+}$ be a cylindrical Brownian motion with respect to $\F$. For $\Phi = \one_{(a,b]} h\otimes  \xi$ as before, let
\begin{equation}\label{eq:defstep}
\int_0^t \Phi \, d W_H = \big(W_H(b\wedge t)h - W_H(a\wedge t)h\big) \xi,  \ \ \ t\in [0,T].
\end{equation}
and extend this by linearity.

\begin{definition}\label{def:stochint}
Let $\Phi:[0,T]\times\Omega\to \calL(H,E)$ be an $H$-strongly measurable and adapted process which is scalarly in $L^0(\O;L^2(0,T;H))$. The process $\Phi$ is called stochastically integrable if
there exists a sequence $(\Phi_n)_{n\ge 1}$ of adapted step processes
such that:
\begin{enumerate}[(i)]
\item for all $x^*\in E^*$ we have $\limn  \Phi_n^* x^* = \Phi^*x^*$ in $L^0(\O;L^2(0,T;H))$;
\item there exists a process $\zeta\in L^0(\O;C_b([0,T];E))$ such that
$$ \zeta = \limn \int_0^\cdot \Phi_n(t)\,dW_H(t) \quad\hbox{in $L^0(\O;C_b([0,T];E))$};$$
\end{enumerate}
\end{definition}
By \cite[Proposition 17.6]{Kal} (ii) always implies that $(\Phi_n^* x^*)_{n\geq 1}$ is a Cauchy sequence in the space $L^0(\O;L^2(0,T;H))$. Moreover, by an additional approximation argument one can take each $\Phi_n$ to be a linear combination of  $\one_{(a,b]} h\otimes  \xi$, where $\xi$ is simple. Therefore, the above definition is equivalent to the assertion \cite[Theorem 5.9 (1)]{NVW1}.
An approximation in a finite dimensional subspace of $E$ shows that one may allow $a$ and $b$ to be stopping times in \eqref{eq:defstep}.

Let $\Phi:[0,T]\times\Omega\to \calL(H,E)$ be $H$-strongly measurable and adapted and such that $\Phi\one_{[0,T]}$ is scalarly in $L^0(\O;L^2(0,T;H))$ for every $0\leq T<\infty$. In this situation, $\Phi$ is called {\em locally stochastically integrable} if for every $0\leq T<\infty$, $\Phi\one_{[0,T]}$ is stochastically integrable. Clearly, in this case one can also find a pathwise continuous process $\zeta:\O\times\R_+\to E$ such that $\zeta(t) = \int_0^\infty \Phi(s)\one_{[0,t]}(s) \, d W_H(s)$.

Recall the following result \cite[Theorems 3.6, 5.9 and 5.12]{NVW1} (the case $p\in (0,1]$ can be either deduced from Lenglart's inequality or can be found in \cite{CoxVer2}). An overview on the theory of \umd spaces can be found \cite{Bu3}. As in \cite{Bu3} let $\beta_{p,E}$ with $p\in (1, \infty)$ be the \umd constant of $E$.
\begin{proposition}\label{prop:twosided}
Let $E$ be a \umd space. Let $\Phi:[0,T]\times\O\to \calL(H,E)$ be an $H$-strongly measurable and adapted process which is weakly in $L^0(\O;L^2(0,T))$. Then $\Phi$ is stochastically integrable if and only if $\Phi$ represents an element $R_{\Phi}\in L^0_{\F}(\O;\g(L^2(0,T;H), E))$. In this case, for all $p\in (0,\infty)$ one has
\[c_p^{-p} \beta_{p,E}^{-p} \E \|\Phi\|_{\g(L^2(0,T;H), E)}^p \leq \E \sup_{t\in[0,T]}\Big\|\int_0^t \Phi \, d W_H\Big\|^p \leq C_p^{p} \beta_{p,E}^{p} \E\|\Phi\|_{\g(L^2(0,T;H), E)}^p, \]
whenever one of the two terms is finite,
where $c_p, C_p$ are constants only depending on $p$
\end{proposition}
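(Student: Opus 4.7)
The plan is to combine three ingredients: the $L^p$-version of the statement for $p\in(1,\infty)$ (already available in the literature), a Chebyshev bootstrap to handle the $L^0$-equivalence, and a Lenglart-type good-$\lambda$ argument to extend the moment bounds to $p\in(0,1]$.

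For $p\in(1,\infty)$, the entire proposition is already contained in \cite[Theorems 3.6, 5.9 and 5.12]{NVW1}, which for \umd $E$ gives both the equivalence between stochastic integrability and representability in $L^p_\F(\O;\g(L^2(0,T;H),E))$ and the two-sided Burkholder-type inequality with constants $c_p\beta_{p,E}$ and $C_p\beta_{p,E}$. I would cite this verbatim as the engine of the proof.

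For the $L^0$-equivalence I would bootstrap via Chebyshev. Suppose $\Phi$ represents $R_\Phi\in L^0_\F(\O;\g)$; by definition choose adapted step processes $\Phi_n$ with $R_{\Phi_n}\to R_\Phi$ in $L^0(\O;\g)$. Introducing the stopping time $\sigma_\mu^{n,m}:=\inf\{t:\|(\Phi_n-\Phi_m)\one_{[0,t]}\|_\g>\mu\}$, the $L^2$-upper bound from Step 1 applied to the stopped integrand yields
\[\P\Big(\sup_{t\in[0,T]}\Big\|\int_0^t(\Phi_n-\Phi_m)\,dW_H\Big\|>\lambda\Big)\leq \P(\|R_{\Phi_n-\Phi_m}\|_\g>\mu)+\frac{C_2^2\beta_{2,E}^2\mu^2}{\lambda^2}.\]
Given $\varepsilon,\lambda>0$, first choose $\mu$ small enough to make the second term less than $\varepsilon/2$, and then $n,m$ large enough to make the first term less than $\varepsilon/2$; the integrals are then Cauchy in $L^0(\O;C_b([0,T];E))$, so $\Phi$ is stochastically integrable. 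The converse follows symmetrically from the $L^2$-lower bound applied to a stopped version, showing $(R_{\Phi_n})$ is Cauchy in $L^0(\O;\g)$ whenever the stochastic integrals converge in $L^0(\O;C_b)$.

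For the moment inequality at $p\in(0,1]$ I would use Lenglart's inequality in its good-$\lambda$ form. Define $\sigma_\lambda:=\inf\{t\in[0,T]:\|\Phi\one_{[0,t]}\|_\g>\lambda\}$ (with $\inf\emptyset=T$) and apply the $L^2$-upper bound to $\Phi\one_{[0,\sigma_\lambda]}$; combined with Chebyshev this produces
\[\P\Big(\sup_{t\in[0,T]}\Big\|\int_0^t\Phi\,dW_H\Big\|>K\lambda\Big)\leq\P(\|\Phi\|_\g>\lambda)+\frac{C_2^2\beta_{2,E}^2}{K^2}.\]
Multiplying by $pK^p\lambda^{p-1}$, integrating over $\lambda>0$, and optimizing $K$ yields the upper moment bound for every $p\in(0,2)$, covering $p\in(0,1]$; the lower bound is obtained by an identical argument with $\|\Phi\|_\g$ and $\sup_t\|\int_0^t\Phi\,dW_H\|$ interchanged, using the $L^2$-lower bound. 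The most delicate point is verifying that $t\mapsto\|\Phi\one_{[0,t]}\|_\g$ is adapted and right-continuous, so that $\sigma_\lambda$ and $\sigma_\mu^{n,m}$ are genuine stopping times; this follows from standard monotonicity and continuity properties of the $\g$-norm under restriction to nested subintervals together with $H$-strong measurability of $\Phi$.
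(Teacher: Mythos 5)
The paper does not prove this proposition; it recalls it from \cite[Theorems 3.6, 5.9 and 5.12]{NVW1} and remarks that the case $p\in(0,1]$ ``can be either deduced from Lenglart's inequality or can be found in \cite{CoxVer2}.'' Your overall route --- citing \cite{NVW1} for $p\in(1,\infty)$, a Chebyshev/localization bootstrap for the $L^0$-equivalence (this is exactly Proposition \ref{prop:L0cont}), and a Lenglart-type argument for $p\in(0,1]$ --- is therefore precisely the one the authors intend.

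However, the good-$\lambda$ step as you wrote it fails. From your inequality
\[
\P\Big(\sup_{t\in[0,T]}\Big\|\int_0^t\Phi\,dW_H\Big\|>K\lambda\Big)\leq\P(\|\Phi\|_{\g}>\lambda)+\frac{C_2^2\beta_{2,E}^2}{K^2},
\]
multiplying by $pK^p\lambda^{p-1}$ and integrating over $\lambda\in(0,\infty)$ produces the term $K^{p-2}C_2^2\beta_{2,E}^2\int_0^\infty p\lambda^{p-1}\,d\lambda=\infty$, because the second summand does not decay in $\lambda$; no choice of the constant $K$ repairs this. The standard fix, which is the actual content of Lenglart's inequality, is to retain the truncated second moment: since $t\mapsto\|\Phi\one_{[0,t]}\|_{\g}$ is nondecreasing and continuous, one has $\|\Phi\one_{[0,\sigma_\lambda]}\|_{\g}\leq\lambda$ pointwise, hence $\E\|\Phi\one_{[0,\sigma_\lambda]}\|_{\g}^2\leq\E\big(\|\Phi\|_{\g}^2\wedge\lambda^2\big)$, and the second term becomes $K^{-2}\lambda^{-2}C_2^2\beta_{2,E}^2\,\E(\|\Phi\|_{\g}^2\wedge\lambda^2)$. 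Integrating \emph{that} against $p\lambda^{p-1}\,d\lambda$ gives $\frac{2}{2-p}\E\|\Phi\|_{\g}^p$ for $p\in(0,2)$, and the argument closes (the lower bound is symmetric, using $\E\sup_{t\le\sigma}\|\zeta(t)\|^2\wedge\lambda^2$ with the first-exit time of $\zeta$). A second, minor point: with the filtration only assumed complete, $\inf\{t:\|\Phi\one_{[0,t]}\|_{\g}>\lambda\}$ is a priori only an optional time; use $\inf\{t:\|\Phi\one_{[0,t]}\|_{\g}\geq\lambda\}$ (first entry into a closed set by a continuous adapted nondecreasing process), which is a genuine stopping time and still satisfies the bound $\|\Phi\one_{[0,\sigma_\lambda]}\|_{\g}\leq\lambda$.
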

This result gives a useful description of the stochastically integrable processes. Moreover, as the $L^p$-estimate for the stochastic integral is a two-sided estimate, it is the right one. Estimation in the $\g$-norm is not always easy, but by now there a many techniques for that. If $E$ is a Hilbert space, then $\g(L^2(0,T;H), E)=L^2(0,T;\g(H,E))$. If $E = L^q$ a more explicit description of the $\g$-norm can be given as well (see \cite[Theorem 6.2]{NVWco}). Finally, we note that the Proposition \ref{prop:twosided} is naturally limited to \umd spaces, because the validity of the two-sided estimate for all adapted step processes already implies the \umd property (see \cite{Ga1}).

The next result will also be used and can be found in \cite[Lemma 5.4 and Theorem 5.5]{NVW1}.
\begin{proposition}\label{prop:L0cont}
Let $E$ be a \umd space and let $p\in(1, \infty)$. Let $\Phi:[0,T]\times\O\to \calL(H,E)$ be an $H$-strongly measurable and adapted process which is weakly in $L^0(\O;L^2(0,T))$. If $\Phi$ represents an element $R_{\Phi}\in L^0_{\F}(\O;\g(L^2(0,T;H), E))$,
then for all $\delta>0$ and $\varepsilon>0$ we have
\[
\P\Big(\sup_{t\in[0,T]}\Big\|\int_0^t \Phi \, d W_H\Big\|>\varepsilon\Big) \leq
\frac{C_p^p \beta_{p,E}^p\delta^p}{\varepsilon^p} + \P\big(\|\Phi\|_{\g(L^2(0,T;H), E)}\geq
\delta\big)
\]
and
\[
\P\big(\|\Phi\|_{\g(L^2(0,T;H), E)}
>\varepsilon\big) \leq \frac{c_p^p \beta_{p,E}^p \delta^p}{\varepsilon^p} + \P\Big(\sup_{t\in[0,T]}\Big\|\int_0^t \Phi \, d W_H\Big\|\geq
\delta\Big),
\]
where $c_p$ and $C_p$ are constants only depending on $p$.
\end{proposition}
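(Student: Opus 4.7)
The plan is to prove both estimates by the standard stopping-time localization together with the two-sided $L^p$-estimate of Proposition \ref{prop:twosided}. For the first inequality, set
\[
N_t := \|\Phi \one_{[0,t]}\|_{\g(L^2(0,T;H), E)}, \qquad \tau := \inf\{t\in [0,T] : N_t \geq \delta\},
\]
with the usual convention $\inf\emptyset = T$. Since $\Phi$ represents an element of $L^0_{\F}(\O;\g(L^2(0,T;H),E))$, by the definition recalled before Proposition \ref{prop:twosided} it is a limit (in the appropriate sense) of adapted step processes, and for step processes $t\mapsto N_t$ is continuous and adapted; a passage to the limit then shows that $\tau$ may be taken to be a stopping time and that $N_\tau \leq \delta$ almost surely. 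In particular, the truncated process $\Phi^\tau := \Phi\one_{[0,\tau]}$ satisfies $\|\Phi^\tau\|_{\g(L^2(0,T;H),E)} \leq \delta$ pointwise, and on the event $\{N_T < \delta\}$ one has $\tau = T$, hence $\int_0^\cdot \Phi\, dW_H = \int_0^\cdot \Phi^\tau\, dW_H$ on a set of probability at least $1 - \P(N_T\geq\delta)$.

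With this, I would split
\[
\P\Big(\sup_{t\in[0,T]}\Big\|\int_0^t \Phi\,dW_H\Big\|>\varepsilon\Big) \leq \P\Big(\sup_{t\in[0,T]}\Big\|\int_0^t \Phi^\tau\,dW_H\Big\|>\varepsilon\Big) + \P(N_T\geq \delta),
\]
apply Chebyshev to the first term, and then apply the right-hand inequality of Proposition \ref{prop:twosided} to $\Phi^\tau$ (which lies in $L^p(\O;\g(L^2(0,T;H),E))$ because of the bound $\|\Phi^\tau\|_\g \leq \delta$). This yields the constant $C_p^p\beta_{p,E}^p\delta^p/\varepsilon^p$.

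For the second inequality I would dually introduce
\[
\sigma := \inf\Big\{t\in[0,T] : \Big\|\int_0^t \Phi\,dW_H\Big\| \geq \delta\Big\},
\]
a stopping time by continuity of the stochastic integral process, and use $\Phi\one_{[0,\sigma]}$. On $\{\sup_t\|\int_0^t\Phi\,dW_H\|<\delta\}$ we have $\sigma = T$, so $\Phi\one_{[0,\sigma]} = \Phi$. The \emph{left-hand} inequality of Proposition \ref{prop:twosided} applied to $\Phi\one_{[0,\sigma]}$, combined with the bound $\sup_t\|\int_0^t \Phi\one_{[0,\sigma]}\,dW_H\|\leq \delta$ and Chebyshev, then gives the desired estimate with constant $c_p^p\beta_{p,E}^p\delta^p/\varepsilon^p$.

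The main technical obstacle is justifying that the localization may be carried out in this generality: for a process $\Phi$ which only represents an $L^0_{\F}$-element rather than being itself a step process, one must check that $t\mapsto N_t$ admits an adapted, continuous modification so that $\tau$ is a stopping time, and that $\Phi\one_{[0,\tau]}$ is again stochastically integrable with the natural truncation identity for the integral. This is handled by choosing an approximating sequence of adapted step processes converging to $\Phi$ in the sense of the representation in $L^0_{\F}(\O;\g(L^2(0,T;H),E))$, applying the estimates on the step level, and passing to the limit using Proposition \ref{prop:twosided} together with Fatou's lemma; once that is in place, the rest of the argument is just Chebyshev.
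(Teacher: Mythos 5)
Your argument is correct: the paper itself gives no proof of Proposition \ref{prop:L0cont} but simply cites \cite[Lemma 5.4 and Theorem 5.5]{NVW1}, and your stopping-time localization combined with the two-sided estimate of Proposition \ref{prop:twosided} (using that $t\mapsto \|\Phi\one_{[0,t]}\|_{\g(L^2(0,T;H),E)}$ is continuous, nondecreasing and adapted, so the hitting times are stopping times) is essentially the argument used in that reference. The technical points you flag — continuity of the $\g$-norm of the truncation and the truncation identity for the stochastic integral at a stopping time — are exactly the ones handled there by approximation with adapted step processes, so nothing is missing.
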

This results should be interpreted as an equivalence of the convergence of a sequence of stochastic integrals $\Big(\int_0^\cdot \Phi_n\, d W_H\Big)_{n\geq 1}$ in $L^0(\O;C_b([0,T];E))$ and the convergence of $(\Phi_n)_{n\geq 1}$ in $L^0_{\F}(\O;\g(L^2(0,T;H), E)$. The constant $\beta_{p,E}$ plays an important role in the proof of Theorem \ref{thm:bddpaths} and is not stated explicitly in \cite{NVW1}, but can easily be deduced from its proof. Moreover, one can also check that the inequalities are valid with $c_2 = 1$ and $C_2=2$.

\section{Doob's regularization theorem}

The following is a simple extension of the Doob regularization theorem for martingales to the vector-valued situation, and it is used two times in the paper.

\begin{proposition}\label{prop:Doobreg}
Let $E$ be a Banach space and let $(\O, \A, \P)$ be a probability space with a complete filtration $\F = (\F_t)_{t\in \R_+}$. If $M:\R_+\times\O\to E$ is an $(\mathscr F_{t-})$-martingale, then $M$ has an $(\mathscr F_{t-})$-adapted modification where all paths are left-continuous and admit right limits.
\end{proposition}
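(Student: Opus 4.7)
The plan is to reduce pathwise regularity of the $E$-valued martingale to finitely many scalar Doob regularizations via a finite-rank approximation of a closing random variable. First, it suffices to build such a modification on each $[0,T]$ with $T\in\N$: two left-continuous-with-right-limits modifications of $M|_{[0,T]}$ agree at every rational almost surely and are hence indistinguishable (left-continuity identifies them off a common null set), so the constructions on $[0,1],[0,2],\ldots$ patch consistently. Fix $T>0$ and set $X:=M_T$, so $M_t=\E(X\mid\F_{t-})$ a.s.\ on $[0,T]$. Choose simple functions $X_n=\sum_{i=1}^{k_n}x_i^n\one_{A_i^n}$ with $A_i^n\in\F_T$, $x_i^n\in E$ and $\|X-X_n\|_{L^1(\O;E)}\le 4^{-n}$, and set
\[
M^n_t\,:=\,\E(X_n\mid\F_{t-})\,=\,\sum_{i=1}^{k_n} x_i^n\,N^{n,i}_t,\qquad N^{n,i}_t\,:=\,\E(\one_{A_i^n}\mid\F_{t-}).
\]
Each $N^{n,i}$ is a scalar $(\F_{t-})$-martingale; since $(\F_{t-})$ is left-continuous, the classical scalar Doob regularization theorem (via finite upcrossings along the rationals) gives a left-continuous modification of $N^{n,i}$ admitting right limits. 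A finite $E$-linear combination of such real-valued paths is again left-continuous with right limits, so each $M^n$ inherits a modification of this type, still denoted $M^n$.

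The second step is to upgrade the $L^1$-convergence $M^n\to M$ to almost sure uniform convergence on $[0,T]$. Because $M^n-M^m$ is an $E$-valued martingale closed by $X_n-X_m$, the process $\|M^n-M^m\|$ is a scalar submartingale, and Doob's weak-$L^1$ maximal inequality yields
\[
\P\Bigl(\sup_{t\in[0,T]}\|M^n_t-M^m_t\|>\lambda\Bigr)\,\le\,\lambda^{-1}\|X_n-X_m\|_{L^1(\O;E)}, \qquad \lambda>0.
\]
Taking $m=n+1$ and $\lambda=2^{-n}$, a Borel-Cantelli argument produces a $\P$-null set off which $(M^n)$ is uniformly Cauchy in $t\in[0,T]$. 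Define $\widetilde M_t(\omega)$ to be this uniform limit on the set of full measure and $0$ on its complement. Uniform limits of left-continuous-with-right-limits paths are of the same type, $(\F_{t-})$-adaptedness is preserved under pointwise limits, and for each fixed $t$ the $L^1$-convergence $M^n_t\to M_t$ passes to an almost sure subsequence, so $\widetilde M_t=M_t$ almost surely: $\widetilde M$ is the required modification.

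The main obstacle, and the reason for approaching $M$ via the finite-rank martingales $M^n$ rather than by a direct upcrossing argument on the $E$-valued paths, is pathwise norm convergence. Finite upcrossings of the scalar submartingales $\|M_s-y\|$ for $y$ in a countable dense subset $Y\subset E$ do \emph{not} imply Cauchy behaviour of $M_s$ along rationals in the norm topology of $E$; for instance, in $\ell^2$ the sequence $x_n=e_n$ satisfies $\|x_n-y\|\to\sqrt{\|y\|^2+1}$ for every $y\in c_{00}$ while $(x_n)$ itself diverges. Approximating $X=M_T$ in $L^1(\O;E)$ by simple functions bypasses this defect by reducing everything to finitely many scalar regularizations per approximation step, with Doob's weak-$L^1$ maximal inequality propagating the resulting pathwise regularity to the limit.
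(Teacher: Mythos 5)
Your proof is correct and follows essentially the same route as the paper: approximate the closing variable $M_T$ by $\F_T$-simple functions in $L^1(\O;E)$, regularize the resulting finite-rank martingales $\E(X_n\mid\F_{t-})$ via the scalar Doob regularization theorem, and pass to the limit uniformly on $[0,T]$ using Doob's weak-$L^1$ maximal inequality together with contractivity of the conditional expectation. The only (immaterial) difference is that the paper phrases the limit step as Cauchyness in probability in the complete space of caglad paths, whereas you extract an almost surely uniformly convergent sequence via Borel--Cantelli.
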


\begin{proof}
It suffices to consider the case that $M$ is a martingale which is constant
after some time $T$. We can find a sequence of $\mathcal{F}_T$-simple functions $(f_n)_{n\geq 1}$
such that $M_T = \limn f_n$ in $L^1(\O;E)$. Let $(M^{n})_{n\geq 1}$ be the
sequence of martingales defined by $M^{n}_t = \E (f_n|\mathcal{F}_{t-})$. It
follows from the real-valued case of Doob's regularization theorem (see \cite[Theorem II.2.5 and Proposition II.2.7]{RY}) that each $M^{n}$ has an $(\mathscr F_{t-})$-adapted caglad modification
$\tilde{M}^{n}$. Furthermore, by \cite[Proposition 7.15]{Kal} and the
contractivity of the conditional expectation, for arbitrary $\delta>0$, we
have
\[\delta\,\P\Big(\sup_{t\in [0,T]}\|\tilde{M}^{n}_t-\tilde{M}^{m}_t\|>\delta\Big)\leq \E \|M^{n}_T - M^{m}_T\| \to 0 \ \ \text{if $n,m\to \infty$}.\]
Hence $(\tilde{M}^{n})_{n\geq 1}$ is a Cauchy sequence in probability in the
space $(CL([0,T];E), \|\cdot\|_{\infty})$ of caglad functions on $[0,T]$. Since $CL([0,T];E)$ is complete, $\tilde{M}^{n}$ is convergent to
some $\tilde{M}$ in $CL([0,T];E)$ in probability. For all $t\in [0,T]$ and all
$\delta>0$, we have
\[\begin{aligned}
\delta \P(\|\tilde{M}_t - M_t\|>\delta) & \leq \delta \P\Big(\|\tilde{M}_t -
\tilde{M}^n_t\|>\frac{\delta}{2}\Big) + \delta \P\Big(\|M^n_t - M_t
\|>\frac{\delta}{2}\Big)
\\ & \leq \delta \P\Big(\|\tilde{M}_t -
\tilde{M}^n_t\|>\frac{\delta}{2}\Big) + 2\E\|M^n_t - M_t \|
\\ & \leq \delta \P\Big(\|\tilde{M}_t -
\tilde{M}^n_t\|>\frac{\delta}{2}\Big) + 2\E\|f_n - M_T \|.
\end{aligned}\]
Since the latter converges to $0$ as $n$ tends to infinity and $\delta>0$ was
arbitrary, it follows that $\tilde{M}_t = M_t$ almost surely. This proves that
$\tilde{M}$ is the required modification of $M$.
\end{proof}

{\em Acknowledgment} -- The authors would like to thank Jan van Neerven helpful comments and careful reading. The authors thank the
anonymous referee for his/her help to improve the clarity of the exposition.

\def\polhk#1{\setbox0=\hbox{#1}{\ooalign{\hidewidth
  \lower1.5ex\hbox{`}\hidewidth\crcr\unhbox0}}}
  \def\polhk#1{\setbox0=\hbox{#1}{\ooalign{\hidewidth
  \lower1.5ex\hbox{`}\hidewidth\crcr\unhbox0}}} \def\cprime{$'$}
\providecommand{\bysame}{\leavevmode\hbox to3em{\hrulefill}\thinspace}

\end{document}